\newcommand{\der}{\delta}
\newcommand{\pt}[1]{\overset{#1}{\frown}_q}
\newcommand{\contra}[1]{\stackrel{#1}{\frown}}
\newcommand{\ca}{{\mathcal A}}
\newcommand{\al}{\alpha}
\newcommand{\si}{\sigma}
\newcommand{\vp}{\varphi}
\newcommand{\N}{{\mathbb N}}
\newcommand{\R}{{\mathbb R}}
\newcommand{\Z}{{\mathbb Z}}
\newcommand{\bean}{\begin{eqnarray*}}
\newcommand{\eean}{\end{eqnarray*}}
\newcommand{\ben}{\begin{enumerate}}
\newcommand{\een}{\end{enumerate}}
\newcommand{\beq}{\begin{equation}}
\newcommand{\eeq}{\end{equation}}
\newtheorem{theorem}{Theorem}[section]
\newtheorem{definition}[theorem]{Definition}
\newtheorem{lemma}[theorem]{Lemma}
\newtheorem{proposition}[theorem]{Proposition}
\theoremstyle{remark}
\newtheorem{remark}[theorem]{Remark}
\title{Fourth Moment Theorem and $q$-Brownian Chaos}
\begin{document}

\begin{center}
{\large\textbf{
Fourth Moment Theorem and $q$-Brownian Chaos
}}\\~\\
Aur\'elien Deya\footnote{Institut \'Elie Cartan, Universit\' e
de Lorraine, BP 70239, 54506 Vandoeuvre-l\`es-Nancy, France. Email: {\tt Aurelien.Deya@iecn.u-nancy.fr}},
Salim Noreddine\footnote{Laboratoire de Probabilit\'es et Mod\`eles Al\'eatoires, Universit\'e Paris 6, Bo\^ite courrier 188, 4 Place Jussieu, 75252 Paris Cedex 5, France. Email:
{\tt salim.noreddine@polytechnique.org}}
and Ivan Nourdin\footnote{Institut \'Elie Cartan, Universit\' e
de Lorraine, BP 70239, 54506 Vandoeuvre-l\`es-Nancy, France. Email: {\tt inourdin@gmail.com}.
Supported in part by the two following (french) ANR grants: `Exploration des Chemins Rugueux'
[ANR-09-BLAN-0114] and `Malliavin, Stein and Stochastic Equations with Irregular Coefficients'
[ANR-10-BLAN-0121].}
\end{center}

\bigskip

{\small \noindent {\bf Abstract:} In 2005, Nualart and Peccati \cite{NP} showed the so-called {\it Fourth Moment Theorem} asserting that, for a sequence of normalized multiple Wiener-It\^o integrals to converge to the standard Gaussian law, it is necessary and sufficient
that its fourth moment tends to 3. A few years later, Kemp {\it et al.} \cite{knps} extended this theorem to a sequence of normalized multiple Wigner integrals, in the context of the free Brownian motion.
The $q$-Brownian motion, $q\in(-1,1]$,
introduced by the physicists Frisch and Bourret \cite{FB} in 1970 and mathematically studied by Bo$\dot{\rm z}$ejko and Speicher \cite{BSp1}
in 1991, interpolates between the classical Brownian motion ($q=1$) and the free Brownian motion $(q=0)$, and is one of the nicest examples
of non-commutative processes. The question we shall solve in this paper is the following: what does the Fourth Moment Theorem become
when dealing with a $q$-Brownian motion?
\bigskip

\noindent {\bf Keywords:} Central limit theorems; $q$-Brownian motion; non-commutative probability space; multiple integrals.
\bigskip

\noindent
{\bf  AMS subject classifications}: 46L54; 60H05; 60F05

\section{Introduction and main results}

The $q$-Brownian motion was introduced in 1970 by the physicists Frisch and Bourret \cite{FB} as an intermediate model between two standard theoretical axiomatics (see also \cite{Gre} for another physical interpretation). From a probabilistic point of view, it may be seen as a smooth and natural interpolation between two of the most fundamental processes in probability theory: on the one hand, the \emph{classical Brownian motion} $(W_t)_{t\geq 0}$ defined on a classical probability space $(\Omega,\mathcal{F},P)$; on the other hand, the \emph{free Brownian motion} $(S_t)_{t\geq 0}$ at the core of Voiculescu's free probability theory and closely related to the study of large random matrices (see \cite{voiculescu}).

\smallskip

The mathematical construction of the $q$-Brownian motion is due to Bo$\dot{\rm z}$ejko and Speicher \cite{BSp1}, and it heavily relies on the theory of \emph{non-commutative probability spaces}. Thus,
before describing our results and for the sake of clarity, let us first introduce some of the central concepts of this theory (see \cite{nicaspeicher} for a systematic presentation).

\smallskip

A {\em $W^\ast$-probability space} (or a non-commutative probability space) is a von Neumann algebra $\mathcal{A}$ (that is,
an algebra of bounded operators on a real
separable Hilbert space,
closed under adjoint and convergence in the weak operator topology) equipped with a {\em trace} $\vp$,
that is, a unital linear functional (meaning preserving the identity) which is weakly
continuous, positive (meaning $\vp(X)\ge 0$
whenever $X$ is a non-negative element of $\mathcal{A}$; i.e.\ whenever $X=YY^\ast$
for some $Y\in\mathcal{A}$), faithful (meaning that if
$\vp(YY^\ast)=0$ then $Y=0$), and tracial (meaning that $\vp(XY)=\vp(YX)$ for all
$X,Y\in\mathcal{A}$, even though in general $XY\ne YX$).

In a $W^\ast$-probability space $(\ca,\vp)$, we refer to the self-adjoint elements of the
algebra as
{\em random variables}.  Any
random variable $X$ has
a {\em law}: this is the unique compactly supported
probability measure $\mu$ on $\R$
with the same moments as $X$; in other words, $\mu$ is such that
\begin{equation}\label{mu}
\int_{\R} Q(x) d\mu(x) = \vp(Q(X)),
\end{equation}
for any real polynomial $Q$. Thus, and as in the classical probability theory, the focus is more on the \emph{laws} (which, in this context, is equivalent to the sequence of \emph{moments}) of the random variables than on the underlying space $(\ca,\vp)$ itself. For instance, we say that a sequence $\{X_k\}_{k\geq 1}$ of random variables such that $X_k \in (\ca_k,\vp_k)$ converges to $X\in (\ca,\vp)$ if, for every positive integer $r$, one has $\vp_k(X_k^r ) \to \vp ( X^r)$ as $k\to\infty$. In the same way, we consider here that any family $\{X^i\}_{i\in I}$ of random variables on $(\ca,\vp)$ is `characterized' by the set of all of its \emph{joint moments} $\vp(X^{i_1} \ldots X^{i_r})$ ($i_1,\ldots,i_r \in I$, $r\in \N$), and we say that $\{X^i_k\}_{i\in I}$ converges to $\{X^i\}_{i\in I}$ (when $k\to\infty$) if the convergence of the joint moments holds true (see \cite[Lecture 4]{nicaspeicher} for further details on non-commutative random systems).

\smallskip

It turns out that a rather sophisticated combinatorial machinery is hidden behind most of these objects, see \cite{nicaspeicher}. This leads in particular to the notion of \emph{crossing/non crossing pairing}, which is a central
tool in the theory.

\begin{definition}\label{d:crossing}
1. Let $r$ be an even integer. A \emph{pairing} of $\{1,\ldots,r\}$ is any partition of $\{1,\ldots,r\}$ into $r/2$ disjoint subsets, each of cardinality $2$. We denote by $\mathcal{P}_2(\{1,\ldots,r\})$ the set of all pairings of $\{1,\ldots,r\}$.

2. When $\pi \in \mathcal{P}_2(\{1,\ldots,r\})$, a \emph{crossing} in $\pi$ is any set of the form $\{\{x_1,y_1\},\{x_2,y_2\}\}$ with $\{x_i,y_i\}\in \pi$ and $x_1 < x_2 <y_1 <y_2$. The number of such crossings is denoted by $\mathrm{Cr}(\pi)$. The subset of all non-crossing pairings in $\mathcal{P}_2(\{1,\ldots,r\})$
(i.e., the subset of all $\pi\in\mathcal{P}_2(\{1,\ldots,r\})$ satisfying $\mathrm{Cr}(\pi)=0$)  is denoted by $NC_2(\{1,\ldots,r\})$.
\end{definition}

By means of the objects given in Definition \ref{d:crossing}, it is simple to compute the joint moments related to the classical Brownian motion $W$ or to the free Brownian motion $S$, and this actually leads to the so-called Wick formula. Namely, for every $t_1,\ldots,t_r \geq 0$, one has
\begin{eqnarray}\label{form-mb-class}
E\big[W_{t_1} \ldots W_{t_r}\big]&=&\sum_{\pi \in \mathcal{P}_2(\{1,\ldots,r\})} \prod_{\{i,j\}\in \pi} (t_i \wedge t_j),\\
\label{form-mb-lib}
\vp\big(S_{t_1}\ldots S_{t_r}  \big) &=&\sum_{\pi \in NC_2(\{1,\ldots,r\})} \prod_{\{i,j\}\in \pi} (t_i \wedge t_j).
\end{eqnarray}

It is possible to go smoothly from (\ref{form-mb-class}) to (\ref{form-mb-lib}) by using the $q$-Brownian motion, which is one of the nicest examples of non-commutative processes. 

\begin{definition}
Fix $q\in (-1,1)$. A $q$-Browian motion on some $W^\ast$-probability space $(\ca,\vp)$ is a
collection $\{X_t\}_{t\geq 0}$ of random variables on $(\ca,\vp)$ satisfying that, for every integer $r\geq $1 and
every $t_1,\ldots,t_r \geq 0$,
\begin{equation}\label{form-q-bm}
\vp\big( X_{t_1}\ldots X_{t_r}\big)=\sum_{\pi \in \mathcal{P}_2(\{1,\ldots,r\})} q^{\mathrm{Cr}(\pi)} \prod_{\{i,j\}\in \pi} (t_i \wedge t_j).
\end{equation}
\end{definition}

The existence of such a process, far from being trivial, is ensured by the following result.
\begin{theorem}[Bo$\dot{\rm z}$ejko, Speicher]\label{bs}
For every $q\in (-1,1)$, there exists a $W^\ast$-probability space $(\ca_q,\vp_q)$ and a $q$-Brownian motion $\{X^{(q)}_t\}_{t\geq 0}$ built
on it.
\end{theorem}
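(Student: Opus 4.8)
The plan is to realise the $q$-Brownian motion concretely as a family of field operators acting on a $q$-deformed Fock space, which is the construction of Bo$\dot{\rm z}$ejko and Speicher. First I would fix a real separable Hilbert space, say $\ch=L^2(\rpp)$ (complexified), and form the algebraic full Fock space $\cf_{\mathrm{alg}}(\ch)=\bigoplus_{n\ge 0}\ch^{\otimes n}$, with $\ch^{\otimes 0}=\mathbb{C}\,\Omega$ the vacuum line. On it I would put the $q$-deformed sesquilinear form making distinct tensor degrees orthogonal and acting on $\ch^{\otimes n}$ by
\[
\langle f_1\otimes\cdots\otimes f_n,\, g_1\otimes\cdots\otimes g_n\rangle_q
=\sum_{\sigma\in S_n} q^{\,i(\sigma)}\prod_{k=1}^n \langle f_k,g_{\sigma(k)}\rangle,
\]
where $i(\sigma)$ is the number of inversions of the permutation $\sigma$. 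The role of the exponent $i(\sigma)$ is to be the ``linear'' analogue of the crossing number $\mathrm{Cr}$ of Definition~\ref{d:crossing}, and this is what will eventually produce the factor $q^{\mathrm{Cr}(\pi)}$ in (\ref{form-q-bm}).

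The crux is to prove that $\langle\cdot,\cdot\rangle_q$ is positive for every $q\in(-1,1)$ --- in fact strictly positive, so that no quotient by a null space is needed. Equivalently, one must show that the operator $P_n=\sum_{\sigma\in S_n}q^{\,i(\sigma)}U_\sigma$ on $\ch^{\otimes n}$ (with $U_\sigma$ permuting the legs) is positive definite. I would argue by induction on $n$, using the factorisation $P_n=(\id_\ch\otimes P_{n-1})\,R_n$, where $R_n=\id+\sum_{k=2}^n q^{k-1}U_{\gamma_k}$ and $\gamma_k$ is the cyclic permutation bringing the $k$-th leg to the front; the induction closes once one knows that $R_n$ is bounded below by a strictly positive constant, which is immediate for small $|q|$ by a geometric-series estimate (since $\norm{U_{\gamma_k}}=1$) but for general $|q|<1$ requires the finer positivity argument of Bo$\dot{\rm z}$ejko--Speicher. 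Completing $\cf_{\mathrm{alg}}(\ch)$ with respect to $\langle\cdot,\cdot\rangle_q$ then yields the $q$-Fock space $\cf_q(\ch)$.

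With the space in hand, for each $h\in\ch$ I would introduce the left creation operator $a^\ast(h)\colon f_1\otimes\cdots\otimes f_n\mapsto h\otimes f_1\otimes\cdots\otimes f_n$, $a^\ast(h)\Omega=h$; check that it is bounded on $\cf_q(\ch)$; and compute its adjoint, the annihilation operator $a(h)\colon f_1\otimes\cdots\otimes f_n\mapsto\sum_{k=1}^n q^{k-1}\langle h,f_k\rangle\,f_1\otimes\cdots\otimes\widehat{f_k}\otimes\cdots\otimes f_n$, $a(h)\Omega=0$. Together they satisfy the $q$-commutation relation $a(f)a^\ast(g)-q\,a^\ast(g)a(f)=\langle f,g\rangle\,\id$. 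I would then set $X^{(q)}_t:=a(\1_{\ot})+a^\ast(\1_{\ot})$, a bounded self-adjoint operator, let $\ca_q$ be the von Neumann algebra generated by $\{X^{(q)}_t:t\ge 0\}$ inside $B(\cf_q(\ch))$, and define $\vp_q(Y):=\langle\Omega,Y\,\Omega\rangle_q$. That $\vp_q$ is unital, weakly continuous and positive is immediate, being a vector state; faithfulness follows once $\Omega$ is shown cyclic for $\ca_q$ (polynomials in the $X^{(q)}_t$ applied to $\Omega$ are total in $\cf_q(\ch)$), together with traciality.

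It remains to establish traciality and the moment formula (\ref{form-q-bm}). For the latter I would run a Wick-type expansion: writing $\vp_q(X^{(q)}_{t_1}\cdots X^{(q)}_{t_r})$ as a sum of $2^r$ vacuum matrix elements of words in the $a$'s and $a^\ast$'s, only the words containing equally many creators and annihilators in which every annihilator meets a leg to contract survive; each such word is encoded by a pairing $\pi\in\mathcal{P}_2(\{1,\ldots,r\})$, the surviving inner products contribute $\prod_{\{i,j\}\in\pi}\langle\1_{[0,t_i]},\1_{[0,t_j]}\rangle=\prod_{\{i,j\}\in\pi}(t_i\wedge t_j)$, and the powers $q^{k-1}$ accumulated each time an $a$ contracts with the $k$-th leg of the current tensor add up to exactly $\mathrm{Cr}(\pi)$; summing over $\pi$ gives (\ref{form-q-bm}). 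Traciality, $\vp_q(PQ)=\vp_q(QP)$, then reduces by density to cyclic invariance of $\vp_q(X^{(q)}_{t_1}\cdots X^{(q)}_{t_r})$, which follows from (\ref{form-q-bm}) together with the elementary (if not visually obvious) combinatorial fact that $\sum_{\pi\in\mathcal{P}_2(\{1,\ldots,r\})}q^{\mathrm{Cr}(\pi)}\prod_{\{i,j\}\in\pi}c_{ij}$ is invariant under cyclically rotating the indices, for any symmetric array $(c_{ij})$. The main obstacle throughout is the positivity step --- the positive-definiteness of $P_n$ for all $q\in(-1,1)$ --- which is the genuine technical content of the theorem; verifying that $\vp_q$ is truly a trace is the next most delicate point, and everything else is bookkeeping.
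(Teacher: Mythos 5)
The paper offers no proof of Theorem \ref{bs}: it is quoted directly from \cite{BSp1}, so there is nothing internal to compare your argument against. What you describe is precisely the construction of that reference --- the $q$-deformed Fock space over $\mathcal{H}=L^2(\R_+)$ with the inner product twisted by $P_n=\sum_{\sigma\in\mathfrak{S}_n}q^{\mathrm{inv}(\sigma)}U_\sigma$, left creation and annihilation satisfying the $q$-commutation relation, the field operators $X^{(q)}_t=a(\mathbf{1}_{[0,t]})+a^\ast(\mathbf{1}_{[0,t]})$, and the vacuum state. Your bookkeeping is correct: the Wick expansion does produce (\ref{form-q-bm}) with the accumulated powers $q^{k-1}$ summing to $\mathrm{Cr}(\pi)$, and your route to traciality is legitimate because the crossing number of Definition \ref{d:crossing} is invariant under cyclic rotation of $\{1,\ldots,r\}$ (two pairs cross iff their endpoints interleave on a circle, a rotation-invariant condition), after which faithfulness follows from traciality plus cyclicity of $\Omega$ in the usual way.

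The one point at which your proposal is not yet a proof is the one you name yourself: strict positivity of $P_n$ for all $q\in(-1,1)$. The geometric-series estimate gives $\|R_n-\id\|\le|q|/(1-|q|)$, which is only $<1$ for $|q|<1/2$; for $|q|\ge 1/2$ nothing in the sketch establishes $R_n>0$, and this is exactly where the positive-definiteness of $\sigma\mapsto q^{\mathrm{inv}(\sigma)}$ on $\mathfrak{S}_n$ (Bo$\dot{\rm z}$ejko and Speicher's quasi-multiplicative analysis on Coxeter groups) is indispensable. Note also that the induction step needs a word of justification even granting $R_n>0$: a product $(\id\otimes P_{n-1})R_n$ of two non-commuting positive operators is not obviously positive; one should observe that $P_n$ is self-adjoint (since $\mathrm{inv}(\sigma)=\mathrm{inv}(\sigma^{-1})$) and similar to $R_n^{1/2}(\id\otimes P_{n-1})R_n^{1/2}\ge 0$, hence has nonnegative spectrum and is therefore positive. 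With the positivity of $R_n$ taken from \cite{BSp1}, your outline is a correct reduction of Theorem \ref{bs} to the cited reference --- which is, in effect, all the paper itself does.
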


As is immediately seen, formula (\ref{form-q-bm}) allows to recover (\ref{form-mb-class}) by choosing $q=0$ (we
adopt the usual convention $0^0=1$).
On the other hand, although the classical Brownian motion $W$ cannot be identified with a process living on some $W^\ast$-probability space (the laws of its marginals being not compactly supported), it can legitimately be considered as the limit of $X^{(q)}$ when $q\to 1^-$. (This extension procedure can even be made rigorous by considering a larger class of non-commutative probability spaces.) As such, the family $\{X^{(q)}\}_{0\leq q<1}$
of $q$-Brownian motions with a parameter $q$ between 0 and 1 happens to be a `smooth' interpolation between $S$ and $W$.

\begin{definition}
Let $q\in (-1,1)$. For every $t\geq 0$, the distribution of $X^{(q)}_t$ is called the (centered) $q$-Gaussian law with variance $t$. We denote it by $\mathcal{G}_q(0,t)$. Otherwise stated, a given probability measure $\nu$ on $\R$ is distributed according to $\mathcal{G}_q(0,t)$ if it is compactly supported and if its moments are given by
\begin{equation}\label{mom-qbm-intro}
\int_{\R} x^{2k+1} \, d\nu(x)=0\quad\mbox{and}\quad
\int_{\R} x^{2k} \, d\nu(x)=t^{k} \sum_{\pi\in \mathcal{P}_2(\{1,\ldots,2k\})} q^{\mathrm{Cr}(\pi)}.
\end{equation}
The probability measure $\nu_q\sim\mathcal{G}_q(0,1)$ is absolutely continuous with respect to the Lebesgue measure; its density is supported by $\big[\frac{-2}{\sqrt{1-q}},\frac{2}{\sqrt{1-q}} \big]$ and is given, within this interval,
by
\[
\nu_q(dx)= \frac{1}{\pi} \sqrt{1-q} \sin \theta \prod_{n=1}^\infty (1-q^n) |1-q^n e^{2i\theta}|^2, \quad \text{where} \ x=\frac{2\cos \theta}{\sqrt{1-q}} \,\,\mbox{ with } \theta \in [0,\pi].
\]
By convention, we also set $\mathcal{G}_1(0,t)$ as being the probability measure whose density with respect
to the Lebesgue measure is given by
\[
\frac{1}{\sqrt{2\pi t}}e^{-\frac{x^2}{2t}},\quad x\in\R,
\]
that is, $\mathcal{G}_1(0,t)=\mathcal{N}(0,t)$.
\end{definition}

For every $q\in (-1,1)$, the process $X^{(q)}$ shares many similarities with the classical (resp. free) Brownian motion. For instance, it also appears as a limit process of some generalized random walks (see \cite[Theorem 0]{BSp2}). For this reason, one sometimes considers this `$q$-deformation' of $S$ and $W$ (see \cite{BSp1}).
Also, and similarly to the free Brownian motion case, the $q$-Brownian motion appears as the limit of some particular sequences of $q$-Gaussian random variables (see \cite{snia}).

\smallskip

In the seminal paper \cite{NP}, Nualart and Peccati highlighted a powerful convergence criterion for the normal approximation of sequences of multiple integrals with respect to the classical Brownian motion. From now on, we will refer to it as the \emph{Fourth Moment Theorem}. A few years later, it was extended by Kemp {\it et al.} \cite{knps} for the free Brownian motion $S$ and its multiple \emph{Wigner} integrals.

\smallskip

The question we shall solve in this paper is the following: what does the Fourth Moment Theorem become
when dealing more generally with a $q$-Brownian motion? Before stating our main result and in order to put
it into perspective, let us be more specific with the two afore-mentioned versions of the Fourth Moment Theorem
that are already known (that is, in the classical and free
Brownian motion cases).
We let $I_n^W$ (resp. $I_n^S$) denote the $n$th multiple integrals with respect to $W$ (resp. $S$), as they are constructed in \cite{nua} (resp. \cite{biane-speicher}).
The following two theorems are, respectively, the versions of the Fourth Moment Theorem in the classical case ($q=1$)
and in the free case ($q=0$).

\begin{theorem}[Nualart, Peccati]\label{theo-class}
Fix $n\geq 2$ and let $\{f_k\}_{k\geq 1}$ be a sequence of symmetric functions in $L^2(\R_+^n)$ satisfying
\[
E[I_n^W(f_k)^2]=n!  \|f_k\|_{L^2(\R_+^n)}^2 \to 1\quad\mbox{as $k\to\infty$}.
\]
Then, the  following two assertions are equivalent as $k\to\infty$:

\smallskip

\noindent
(i) $E\big[ I_n^W(f_k)^4 \big] \to 3$.

\smallskip

\noindent
(ii) The sequence $I_n^W(f_k)$ converges in law to $\mathcal{N}(0,1)=\mathcal{G}_1(0,1)$.
\end{theorem}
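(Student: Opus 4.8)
The plan is to prove the two implications separately; the substance lies in (i) $\Rightarrow$ (ii), which I would handle by combining Malliavin calculus with Stein's method. Throughout, set $F_k = I_n^W(f_k)$, and let $D$, $\delta$ and $L = -\delta D$ denote the Malliavin derivative, the divergence operator and the Ornstein--Uhlenbeck generator associated with the isonormal Gaussian process over $L^2(\R_+)$ generated by $W$; recall that $L$ acts on the $n$th Wiener chaos as multiplication by $-n$, so that $-DL^{-1}F_k = \tfrac1n DF_k$. The implication (ii) $\Rightarrow$ (i) is the soft one: by hypercontractivity within a fixed Wiener chaos one has $E[F_k^8] \le 7^{4n}(E[F_k^2])^4$, which stays bounded because $E[F_k^2]\to 1$; hence $\{F_k^4\}_{k\ge 1}$ is uniformly integrable, and convergence in law to $N\sim\mathcal{N}(0,1)$ upgrades to $E[F_k^4]\to E[N^4]=3$.

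For (i) $\Rightarrow$ (ii), I would start from the Malliavin integration-by-parts formula: for every $\phi\in C^1(\R)$ with bounded derivative,
\[
E\big[F_k\phi(F_k)\big] = E\big[\phi'(F_k)\langle DF_k,-DL^{-1}F_k\rangle_{L^2(\R_+)}\big] = \tfrac1n\,E\big[\phi'(F_k)\|DF_k\|^2_{L^2(\R_+)}\big].
\]
Feeding this into Stein's bound for the total variation distance to the Gaussian law, one obtains a universal constant $C$ with
\[
d_{TV}\big(F_k,N\big) \le C\,E\Big|1-\tfrac1n\|DF_k\|^2_{L^2(\R_+)}\Big| \le C\Big(\big|1-E[F_k^2]\big| + \sqrt{\mathrm{Var}\big(\tfrac1n\|DF_k\|^2_{L^2(\R_+)}\big)}\Big),
\]
where I used $E\big[\tfrac1n\|DF_k\|^2\big] = E[F_k^2] = n!\|f_k\|^2_{L^2(\R_+^n)} \to 1$. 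Since $d_{TV}$ dominates the topology of convergence in law, everything reduces to showing that the hypothesis $E[F_k^4]\to 3$ forces $\mathrm{Var}\big(\tfrac1n\|DF_k\|^2\big)\to 0$.

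This last implication is the heart of the argument and the step I expect to be the main obstacle, since it is purely a matter of the combinatorics of multiple integrals. Using $D_t I_n^W(f) = n\,I_{n-1}^W(f(\cdot,t))$ one writes $\tfrac1n\|DF_k\|^2 = n\int_0^\infty I_{n-1}^W(f_k(\cdot,t))^2\,dt$, and expanding the square by the product formula $I_p^W(g)I_p^W(h) = \sum_{r=0}^p r!\binom{p}{r}^2 I_{2p-2r}^W(g\widetilde\otimes_r h)$ (with $p=n-1$) exhibits $\tfrac1n\|DF_k\|^2$ as a sum of multiple integrals of orders $0,2,\dots,2n-2$, the order-$0$ term being the constant $E[F_k^2]$. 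A double application of the same product formula yields the companion expansion
\[
E[F_k^4]-3\big(E[F_k^2]\big)^2 = \sum_{r=1}^{n-1}\Big(r!\binom{n}{r}^2\Big)^2(2n-2r)!\Big(\|f_k\otimes_r f_k\|^2 + \binom{2n-2r}{n-r}\|f_k\widetilde\otimes_r f_k\|^2\Big)\ \ge\ 0 .
\]
Computing $\mathrm{Var}\big(\tfrac1n\|DF_k\|^2\big)$ from its chaos expansion, one finds the very same nonnegative linear combination of the quantities $\|f_k\otimes_r f_k\|^2 + \binom{2n-2r}{n-r}\|f_k\widetilde\otimes_r f_k\|^2$, weighted in addition by a factor $r^2/n^2\in(0,1)$; hence $\mathrm{Var}\big(\tfrac1n\|DF_k\|^2\big) \le E[F_k^4]-3\big(E[F_k^2]\big)^2 \to 0$, which together with the previous display completes the proof. (An alternative route, closer to the original argument of Nualart and Peccati, would replace Stein's method by the Dambis--Dubins--Schwarz representation of $F_k$ as a time-changed Brownian motion, but the delicate point — controlling all the intermediate contraction norms by the single scalar $E[F_k^4]-3(E[F_k^2])^2$ — remains the same, and it is precisely this mechanism that must be reexamined in the $q$-deformed setting.)
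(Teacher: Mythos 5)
The paper does not actually prove this statement: Theorem \ref{theo-class} is quoted from Nualart and Peccati \cite{NP} as background, and the only machinery the paper develops (the pairing formula of Theorem \ref{thm:form-mom} and the fourth-moment identity of Proposition \ref{calcul4}) is a purely combinatorial moment computation for $q\in[0,1)$, with the case $q=1$ always taken as known. Your proposal is therefore not comparable to anything in the text; judged on its own, it is a correct outline of the now-standard Stein--Malliavin proof of Nourdin and Peccati (see \cite{NouPecBook}), which is a genuinely different route both from the original Nualart--Peccati argument (Dambis--Dubins--Schwarz) and from the moment/pairing method this paper uses for its $q$-deformation. All the structural steps are right: hypercontractivity gives $(ii)\Rightarrow(i)$; the identity $-DL^{-1}F_k=\tfrac1n DF_k$, the integration-by-parts formula and Stein's bound reduce $(i)\Rightarrow(ii)$ to showing $\mathrm{Var}\big(\tfrac1n\|DF_k\|^2\big)\to 0$; and the chaos expansions of $F_k^2$ and of $\tfrac1n\|DF_k\|^2$ control everything by the contraction norms. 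Two small inaccuracies are worth fixing. First, your fourth-cumulant identity carries a spurious factor $\binom{2n-2r}{n-r}$: the correct form is
\begin{equation*}
E[F_k^4]-3\big(E[F_k^2]\big)^2=\sum_{r=1}^{n-1}(n!)^2\binom{n}{r}^2\left(\|f_k\otimes_r f_k\|^2+\binom{2n-2r}{n-r}\|f_k\widetilde{\otimes}_r f_k\|^2\right),
\end{equation*}
your version being this multiplied termwise by $\binom{2n-2r}{n-r}$. Second, the chaos expansion of $\tfrac1n\|DF_k\|^2$ produces only the \emph{symmetrized} contractions $f_k\widetilde{\otimes}_r f_k$, not the pairs $\|f_k\otimes_r f_k\|^2+\binom{2n-2r}{n-r}\|f_k\widetilde{\otimes}_r f_k\|^2$ as you assert; the comparison $\mathrm{Var}\big(\tfrac1n\|DF_k\|^2\big)\le E[F_k^4]-3\big(E[F_k^2]\big)^2$ nevertheless survives because $\|f_k\widetilde{\otimes}_r f_k\|\le\|f_k\otimes_r f_k\|$ and the weights $r^2/n^2$ are at most $3r/n$. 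Neither slip damages the logic, since in every formula the error terms remain nonnegative combinations of the same contraction norms; but note that it is precisely this combinatorial bookkeeping --- which contractions appear and with which $q$-dependent weights --- that the paper must redo from scratch in Proposition \ref{calcul4}, where the analogue of your variance term is the quantity $\|f\pt{p}f\|_q^2$ and the interpolation produces the limit $\mathcal{G}_{q^{n^2}}(0,1)$ rather than $\mathcal{N}(0,1)$.
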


\begin{theorem}[Kemp, Nourdin, Peccati, Speicher]\label{theo-free}
Fix $n\geq 2$ and let $\{f_k\}_{k\geq 1}$ be a sequence of mirror-symmetric functions in $L^2(\R_+^n)$ (that is,
each $f_k$ is such that $f_k(t_1,\ldots,t_n)=f_k(t_n,\ldots,t_1)$ for almost all $t_1,\ldots,t_n\geq 0$) satisfying
\[
\vp\big(I_n^S(f_k)^2\big)= \|f_k\|_{L^2(\R_+^n)}^2 \to 1\quad\mbox{as $k\to\infty$}.
\]
Then, the following two assertions are equivalent as $k\to\infty$:

\smallskip

\noindent
(i) $\vp\big( I_n^S(f_k)^4 \big) \to 2$.

\smallskip

\noindent
(ii) The sequence $I_n^S(f_k)$ converges in law to $\mathcal{S}(0,1)=\mathcal{G}_0(0,1)$.
\end{theorem}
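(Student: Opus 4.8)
\noindent\emph{Strategy of proof.}

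The implication $(ii)\Rightarrow(i)$ is the soft one. Each $I_n^S(f_k)$ is a bounded self-adjoint operator obeying a Haagerup-type inequality $\norm{I_n^S(f_k)}\le C_n\,\norm{f_k}_{L^2(\R_+^n)}$ \cite{biane-speicher}, so the hypothesis $\norm{f_k}^2\to1$ keeps the supports of the laws $\mu_k$ of $I_n^S(f_k)$ uniformly bounded; for compactly supported measures with uniformly bounded supports, weak convergence forces convergence of all moments, and in particular $\vp(I_n^S(f_k)^4)\to\int_\R x^4\,d\mathcal{S}(0,1)=2$ (see \eqref{mom-qbm-intro} with $q=0$). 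The substance is therefore the converse $(i)\Rightarrow(ii)$, which we would establish in two steps: (a) recast (i) as the vanishing of finitely many $L^2$-norms of contractions; (b) propagate that vanishing to all moments.

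For step (a) we rely on the two pillars of Wigner calculus: the isometry $\vp\big(I_n^S(f)I_m^S(g)\big)=\der_{n,m}\,\langle f,g^\ast\rangle_{L^2(\R_+^n)}$, with $g^\ast(t_1,\ldots,t_n)=\overline{g(t_n,\ldots,t_1)}$, and the product formula $I_n^S(f)I_m^S(g)=\sum_{p=0}^{n\wedge m}I_{n+m-2p}^S(f\contra{p}g)$, where $f\contra{p}g$ is the $p$-th nested contraction \cite{biane-speicher,knps}. Writing $I_n^S(f_k)^2=\sum_{p=0}^nI_{2n-2p}^S(f_k\contra{p}f_k)$, squaring, and using the isometry to kill all products of integrals of unequal order, we obtain $\vp(I_n^S(f_k)^4)=\sum_{p=0}^n\langle f_k\contra{p}f_k,(f_k\contra{p}f_k)^\ast\rangle$. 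Mirror symmetry is essential here: it propagates to the contractions, so $(f_k\contra{p}f_k)^\ast=f_k\contra{p}f_k$ and each summand equals $\norm{f_k\contra{p}f_k}^2\ge0$, while the two extreme ones are $\norm{f_k\contra{0}f_k}^2=\norm{f_k}^4$ and $\norm{f_k\contra{n}f_k}^2=\langle f_k,f_k^\ast\rangle^2=\norm{f_k}^4$. Hence
\[
\vp\big(I_n^S(f_k)^4\big)=2\,\norm{f_k}^4+\sum_{p=1}^{n-1}\norm{f_k\contra{p}f_k}^2 ,
\]
so that, since $\norm{f_k}^2\to1$, assertion (i) is \emph{equivalent} to $\norm{f_k\contra{p}f_k}\to0$ for every $p\in\{1,\ldots,n-1\}$.

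For step (b) we would prove that $\norm{f_k}^2\to1$ together with $\norm{f_k\contra{p}f_k}\to0$ ($1\le p\le n-1$) forces $\vp(I_n^S(f_k)^r)\to\#NC_2(\{1,\ldots,r\})$ for $r$ even and $\vp(I_n^S(f_k)^r)\to0$ for $r$ odd; by \eqref{mom-qbm-intro} with $q=0$ these are exactly the moments of $\mathcal{G}_0(0,1)=\mathcal{S}(0,1)$, and that law, being compactly supported, is determined by them, whence (ii). Iterating the product formula expresses $\vp(I_n^S(f_k)^r)$ as a finite sum, over admissible tuples of contraction orders, of the scalars obtained by reducing the word $I_n^S(f_k)\cdots I_n^S(f_k)$ to the $I_0$-level. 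We would isolate the \emph{principal} tuples --- those in which every contraction carried out is a \emph{full} one, performed in the order prescribed by some non-crossing pairing $\pi\in NC_2(\{1,\ldots,r\})$; collapsing nested full contractions to inner products via mirror symmetry, each such tuple contributes $\norm{f_k}^r\to1$, and there are exactly $\#NC_2(\{1,\ldots,r\})$ of them (none when $r$ is odd, since an odd number of blocks cannot be fully paired off). Every other tuple involves some partial contraction $\contra{p}$ with $1\le p\le n-1$; for such a term we would peel off that contraction and, by repeated use of $\norm{h_1\contra{q}h_2}\le\norm{h_1}\norm{h_2}$ and Cauchy--Schwarz, bound the whole term by $\norm{f_k\contra{p}f_k}$ times a factor that is bounded uniformly in $k$ (this is where $\sup_k\norm{f_k}<\infty$ is used); summing the finitely many such bounds, the remainder tends to $0$.

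The main obstacle is precisely this last combinatorial step: organizing the iterated expansion so that the principal tuples are genuinely in bijection with $NC_2(\{1,\ldots,r\})$ and each really equals $\norm{f_k}^r$, and checking that every non-principal tuple carries an extractable small factor $\norm{f_k\contra{p}f_k}$ with $1\le p\le n-1$ while its remaining factors stay uniformly controlled. Controlling arbitrary iterated contractions by the elementary ones $f_k\contra{p}f_k$ is the technical heart of the argument; everything else is just the algebra of the product formula and of the isometry. The full details are carried out in \cite{knps}.
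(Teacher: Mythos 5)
Your outline is correct, but note that the paper does not actually prove Theorem \ref{theo-free}: it is quoted as background and attributed to \cite{knps}, so there is no in-paper proof to compare against. That said, your strategy is exactly the one the paper deploys for its $q$-deformed generalization, specialized to $q=0$: your identity $\vp\big(I_n^S(f_k)^4\big)=2\norm{f_k}^4+\sum_{p=1}^{n-1}\norm{f_k\contra{p}f_k}^2$ is the $q=0$ case of Proposition \ref{calcul4}, and your step (b) --- vanishing of the intermediate contractions forces all moments to converge to the non-crossing-pairing count --- is the content of the implication $(ii)\Rightarrow(iv)$ in Theorem \ref{theo:main}, carried out there via the joint-moment formula over respecting pairings (Theorem \ref{thm:form-mom}) and the Cauchy--Schwarz extraction of a small factor $\norm{f_k\contra{p}f_k}$ from every pairing that is not a union of full block-to-block matchings. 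The one genuine divergence is the symmetry hypothesis: the paper's computations are set up for fully symmetric kernels (it explicitly postpones the merely mirror-symmetric case), whereas your sketch, following \cite{knps}, works under mirror symmetry alone; this is harmless at $q=0$ because the free moment formula involves only non-crossing pairings, for which the reduction to nested contractions needs no symmetrization. The combinatorial heart of step (b) is only asserted and deferred to \cite{knps}, which is acceptable for a strategy but is where all the real work lies.
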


\

Now, fix a parameter $q\in[0,1]$, and consider a $q$-Brownian motion $X^{(q)}$ on some $W^\ast$-probability space $(\ca_q,\vp_q)$. (Note that in the three forthcoming statements, we extend the definition of $X^{(q)}$ to $q=1$ by naturally setting $X^{(1)}:=W$ and by replacing
$(\ca_1,\vp_1)$ by $(\Omega,\mathcal{F},P)$.) As in the classical and free cases, to each $n\geq 0$ we may associate with $X^{(q)}$ a natural notion of $n$th multiple integral $I_n^{X^{(q)}}$, see Donati-Martin \cite{don-mar} or Section \ref{subsec:mult-int}
for the details. We are now in a position to state the main result of the present paper, which is a
suitable interpolation between Theorem \ref{theo-class}
and Theorem \ref{theo-free}, but in a somehow unexpected way
(see indeed the comment following its statement).

\begin{theorem}\label{theo:intro}
Fix $n\geq 1$, recall that $q\in[0,1]$, and let $\{f_k\}_{k\geq 1}$ be a sequence of symmetric functions in $L^2(\R_+^n)$ satisfying
\[
\vp_q\big(I_n^{X^{(q)}}(f_k)^2\big)= \left( \sum_{\si\in \mathfrak{S}_n} q^{\mathrm{inv}(\si)} \right) \|f_k\|_{L^2(\R^n)}^2 \to 1 \quad \text{as $k\to\infty$},
\]
where the notation ${\rm inv}(\si)$ refers to the number of inversions in $\si$, i.e.,
$$\mathrm{inv}(\si):=\mathrm{Card}\{1\leq i< j\leq n: \ \si(i)>\si(j)\}.$$
Then the following two assertions are equivalent as $k\to\infty$:

\smallskip

\noindent
(i) $\vp_q\big( I_n^{X^{(q)}}(f_k)^4 \big) \to 2+q^{n^2}$.

\smallskip

\noindent
(ii) The sequence $I_n^{X^{(q)}}(f_k)$ converges in law to $\mathcal{G}_{q^{n^2}}(0,1)$.
\end{theorem}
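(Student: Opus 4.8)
\medskip
\noindent\textit{Proof strategy.}\quad
The plan is to argue inside the concrete realization of $X^{(q)}$ on the $q$-deformed Fock space $\mathcal{F}_q(L^2(\R_+))$, where $X^{(q)}_t=a(\1_{[0,t]})+a^\ast(\1_{[0,t]})$, the trace $\vp_q$ is the vacuum state, and, for symmetric $f\in L^2(\R_+^n)$, the multiple integral $I_n^{X^{(q)}}(f)$ is the Wick-ordered operator determined by $I_n^{X^{(q)}}(f)\,\Omega=f$ (see Section~\ref{subsec:mult-int}); at the endpoint $q=1$ this realization degenerates and one argues on $(\Omega,\mathcal F,P)$, recovering exactly Theorem~\ref{theo-class}. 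Everything rests on two algebraic identities, both consequences of the $q$-commutation relation $a(u)\,a^\ast(v)=q\,a^\ast(v)\,a(u)+\langle u,v\rangle\,\id$: first, a \emph{product formula} writing $I_m^{X^{(q)}}(f)\,I_n^{X^{(q)}}(g)$ as a sum, over the ways of contracting $r$ arguments of $f$ against $r$ arguments of $g$ for $0\le r\le m\wedge n$, of multiple integrals of the resulting contractions, each weighted by $q$ raised to the number of crossings that the chosen contraction creates; second, the ensuing \emph{Wick/diagram formula}
\[
\vp_q\big(I_{n_1}^{X^{(q)}}(f_1)\cdots I_{n_r}^{X^{(q)}}(f_r)\big)=\sum_{\pi} q^{\mathrm{Cr}(\pi)}\,(\text{contraction of }f_1,\ldots,f_r\text{ along }\pi),
\]
the sum being over all pairings $\pi$ of the $n_1+\cdots+n_r$ ordered ``legs'' such that no pair lies inside a single block. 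Both identities are first verified on elementary tensors and then extended by bilinearity and density; they specialize to the classical identities at $q=1$ and to the Kemp--Nourdin--Peccati--Speicher ones at $q=0$. Note that (ii)$\Rightarrow$(i) is immediate, since $\mathcal{G}_{q^{n^2}}(0,1)$ is moment-determinate, so convergence in law forces $\vp_q(I_n^{X^{(q)}}(f_k)^4)\to\int x^4\,\mathcal{G}_{q^{n^2}}(0,1)(dx)=2+q^{n^2}$.

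First I would establish a \emph{fourth-moment identity}. Expanding $I_n^{X^{(q)}}(f_k)^2$ with the product formula and using that multiple integrals of distinct orders are $\vp_q$-orthogonal, one gets
\[
\vp_q\big(I_n^{X^{(q)}}(f_k)^4\big)=\big(2+q^{n^2}\big)\,c_{n,q}^2\,\|f_k\|_{L^2(\R_+^n)}^4+R_{n,q}(f_k),\qquad c_{n,q}:=\sum_{\si\in\mathfrak S_n}q^{\mathrm{inv}(\si)},
\]
where $R_{n,q}(f_k)\ge 0$ is comparable to $\sum_{r=1}^{n-1}\|f_k\contra{r}f_k\|^2$ (all $r$-variable partial contractions of $f_k$ with itself, $1\le r\le n-1$, coinciding up to a harmless reordering of the free variables since $f_k$ is symmetric). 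The constant $2+q^{n^2}$ is forced, not chosen: a full contraction matching an entire size-$n$ block against another entire size-$n$ block that straddles it produces exactly $n\cdot n=n^2$ crossings, so the ``diagonal'' diagrams carry the weight $q^{n^2}$ --- this is the source of the (a priori surprising) appearance of $\mathcal{G}_{q^{n^2}}$ rather than $\mathcal{G}_q$. Since the hypothesis gives $\vp_q(I_n^{X^{(q)}}(f_k)^2)=c_{n,q}\|f_k\|^2\to 1$, the identity shows that
\[
\text{(i) holds}\quad\Longleftrightarrow\quad \|f_k\contra{r}f_k\|_{L^2}\longrightarrow 0\ \text{ for every }r=1,\ldots,n-1.
\]

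It then remains to prove that the right-hand condition, together with $c_{n,q}\|f_k\|^2\to1$, implies (ii); by moment-determinacy of $\mathcal{G}_{q^{n^2}}(0,1)$ it suffices to show that all moments $\vp_q(I_n^{X^{(q)}}(f_k)^m)$ converge to those of $\mathcal{G}_{q^{n^2}}(0,1)$. Feeding the information into the diagram formula (after iterating the product formula), every diagram contributing to $\vp_q(I_n^{X^{(q)}}(f_k)^m)$ is indexed by a scheme of complete contractions of the $m$ length-$n$ blocks of legs. If in such a scheme some block is only partially contracted against another, the corresponding value factors through an iterated contraction whose $L^2$-norm is bounded, via Cauchy--Schwarz, by a product of factors $\|f_k\|$ and at least one factor $\|f_k\contra{r}f_k\|$ with $1\le r\le n-1$, hence tends to $0$. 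The surviving diagrams are exactly those grouping the $m$ blocks into $m/2$ pairs of \emph{fully} contracted blocks (so $m$ must be even, and odd moments tend to $0$): a block-pairing $\bar\pi\in\mathcal P_2(\{1,\ldots,m\})$ contributes $(q^{n^2})^{\mathrm{Cr}(\bar\pi)}\,(c_{n,q}\|f_k\|^2)^{m/2}$, because the internal matching of each block-pair, summed with its crossing weights, equals $c_{n,q}\|f_k\|^2$ (the internal crossing counts being precisely the $\mathrm{inv}(\si)$, $\si\in\mathfrak S_n$), while a crossing between two block-pairs creates $n\cdot n=n^2$ leg-crossings, hence the factor $q^{n^2}$. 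Consequently, for $m=2p$,
\[
\vp_q\big(I_n^{X^{(q)}}(f_k)^{2p}\big)\longrightarrow\sum_{\bar\pi\in\mathcal P_2(\{1,\ldots,2p\})}\big(q^{n^2}\big)^{\mathrm{Cr}(\bar\pi)},
\]
and $\vp_q(I_n^{X^{(q)}}(f_k)^{2p+1})\to 0$; by (\ref{mom-qbm-intro}) these are exactly the moments of $\mathcal{G}_{q^{n^2}}(0,1)$, which proves (ii). (When $n=1$ the index set $\{1,\ldots,n-1\}$ is empty, $c_{1,q}=1$ and $q^{n^2}=q$, so both (i) and (ii) hold unconditionally, $I_1^{X^{(q)}}(f_k)$ being $\mathcal{G}_q(0,\|f_k\|^2)$-distributed.)

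The step I expect to be the real obstacle is the fourth-moment identity: proving the $q$-product formula with the exact crossing combinatorics, carrying out the (long) expansion of $\vp_q(I_n^{X^{(q)}}(f_k)^4)$ while tracking every power of $q$, checking that the scalar part collapses to precisely $(2+q^{n^2})\,c_{n,q}^2\,\|f_k\|^4$, and --- most delicately --- showing that $R_{n,q}(f_k)$ is nonnegative and comparable to $\sum_{r=1}^{n-1}\|f_k\contra{r}f_k\|^2$. It is exactly at this last positivity/comparability stage that the restriction $q\in[0,1]$ is used, both through the nonnegativity of the $q$-weights and through the way the combinatorial coefficients recombine; and the cross terms of the form $\langle f_k\contra{r}f_k,\,\widetilde{f_k\contra{n-r}f_k}\rangle$, which have no fixed sign, must be absorbed --- here Cauchy--Schwarz together with the symmetry $r\leftrightarrow n-r$ of the family $\{\|f_k\contra{r}f_k\|\}_{1\le r\le n-1}$ is the natural device. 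Once this identity is in hand, the remaining combinatorics (the iterated-contraction bounds and the identification of the surviving diagrams) are routine.
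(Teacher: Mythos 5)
Your strategy is correct and coincides with the paper's own route: a diagram/Wick formula for joint moments of multiple $q$-integrals (Theorem \ref{thm:form-mom}), a fourth-moment identity showing that, for $q\in[0,1]$, condition $(i)$ is equivalent to the vanishing of all nontrivial contractions $f_k\contra{r}f_k$ (Proposition \ref{calcul4}), and then the identification of the surviving full-block pairings, each contributing $q^{n^2\,\mathrm{Cr}(\bar\pi)}$, to get convergence of all moments. The only slight divergence is cosmetic: in the paper the remainder in the fourth-moment identity is written directly as a sum of manifestly nonnegative terms $\|f\pt{p}f\|_q^2+\big(\sum_{\si\in\mathfrak{S}_{2n}^p}q^{\mathrm{inv}(\si)}\big)\|f\contra{p}f\|^2$, so no Cauchy--Schwarz absorption of sign-indefinite cross terms is needed.
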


\

When, in Theorem \ref{theo:intro}, we consider a value of $q$ which is {\it strictly} between 0 and 1,
 we get that any suitably-normalized sequence
$\{I_n^{X^{(q)}}(f_k)\}$ satisfying the fourth moment condition $(i)$
converges in law, see $(ii)$, to a random variable
which is expressed by means of the parameter $q^{n^2}$ and not $q$, as could have been legitimately expected
 by trying to guess the right statement with the help of Theorems \ref{theo-class} and \ref{theo-free}.
But this phenomenon was of course impossible to predict by taking a look at the case where $q\in\{0,1\}$ because, for these two values,
we precisely have that $q=q^{n^2}$.

\

Two natural questions emerge from Theorem \ref{theo:intro}: $(a)$ what can be said when $q\in(-1,0)$?
$(b)$ what happens if the functions $f_k$ are only mirror-symmetric (as in Theorem \ref{theo-free})?
Regarding $(a)$, it is not difficult to build
explicit counterexamples where the equivalence between $(i)$ and $(ii)$ in Theorem \ref{theo:intro} fails.
For instance, with $q=-1/2$, $n=2$ and $f_k=f=\sqrt{2}\,{\bf 1}_{[0,1]^2}$, we have
$\varphi_q(I_2^{X^{(q)}}(f)^2)=1$ and
$\varphi_q(I_2^{X^{(q)}}(f)^4)=2+q^4$, but $I_2^{X^{(q)}}(f)$ is not $\mathcal{G}_{q^4}(0,1)$-distributed (since $\varphi_q(I_2^{X^{(q)}}(f)^3)=\sqrt{2}(1+q)^2\neq 0$). See Remark \ref{rk:2} for the details. Actually, we do not know if this counterexample hides a general phenomenon or not. Does Theorem \ref{theo:intro} continue to be true for all $q<0$ except (possibly) for some values of $q$, or is it always false when $q<0$? On the other hand, to answer question $(b)$ is unfortunately out of the scope of this paper. Indeed, to do so would imply to change almost all our computations (in order to take into account the lack of full symmetry). We postpone this further
analysis to another paper.

\

Theorem \ref{theo:intro} will be obtained in Section \ref{section:proof} as a consequence of a more general multidimensional version (namely, Theorem \ref{theo:main}). In fact, we will even prove that $(i)$ and $(ii)$  are both equivalent to a third assertion that
only involves the
sequence $\{f_k\}_{k\geq 1}$ and not the value of $q$ (provided it belongs to $[0,1]$). As a consequence, we shall deduce the following transfer principle.

\begin{theorem}[Transfer principle]\label{transferprinciple}
Fix $n\geq 1$ and let $\{f_k\}_{k\geq 1}$ be a sequence of symmetric functions in $L^2(\R_+^n)$ satisfying $\|f_k\|_{L^2(\R_+^n)}^2 \to 1$ as $k\to\infty$. For every $q\in [0,1]$, set
\[
\si_{q}^2:=\sum_{\si\in \mathfrak{S}_n} q^{\mathrm{inv}(\si)}>0.
\]
Then, the following two assertions are equivalent as $k\to\infty$:

\smallskip

\noindent
(i) The sequence $I_n^{X^{(q)}}(f_k)$ converges in law to $\mathcal{G}_{q^{n^2}}(0,\si_{q}^2)$
for {\rm one particular} $q\in[0,1]$.

\smallskip

\noindent
(ii) The sequence $I_n^{X^{(q)}}(f_k)$ converges in law to $\mathcal{G}_{q^{n^2}}(0,\si_{q}^2)$
for {\rm all} $q\in[0,1]$.
\end{theorem}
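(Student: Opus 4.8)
The plan is to deduce the transfer principle from the (multidimensional) equivalence announced in the previous paragraph, namely the fact — to be established in Section \ref{section:proof} via Theorem \ref{theo:main} — that, for a normalized sequence $\{f_k\}$ of symmetric functions in $L^2(\R_+^n)$, convergence in law of $I_n^{X^{(q)}}(f_k)$ to $\mathcal{G}_{q^{n^2}}(0,\si_q^2)$ is equivalent to a third assertion $(iii)$ bearing only on the sequence $\{f_k\}$ and not on $q$. Concretely, $(iii)$ should be the analytic condition that governs the Fourth Moment Theorem in all the known cases: the contractions $f_k \contra{p} f_k \to 0$ in $L^2$ for every $p=1,\ldots,n-1$ (equivalently, that $\|f_k\contra{p}f_k\|\to 0$), together with the normalization $\|f_k\|^2\to\sigma_{\,\cdot}^2$-type control already built into the hypotheses. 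Since $(iii)$ does not mention $q$, the scheme is simply $(i)_{q}\Leftrightarrow (iii)\Leftrightarrow (i)_{q'}$ for any two values $q,q'\in[0,1]$, which is exactly the statement $(i)\Leftrightarrow(ii)$ of the theorem.

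First I would set up the reduction: fixing one particular $q_0\in[0,1]$ for which $(i)$ holds, I invoke the equivalence $(i)_{q_0}\Leftrightarrow(iii)$ from Theorem \ref{theo:main} to conclude that $\{f_k\}$ satisfies the $q$-free analytic condition $(iii)$. Here one must be slightly careful about the normalization: the hypothesis of Theorem \ref{transferprinciple} is $\|f_k\|^2_{L^2(\R_+^n)}\to 1$, so that $\vp_q(I_n^{X^{(q)}}(f_k)^2)=\sigma_q^2\|f_k\|^2\to\sigma_q^2$, which is precisely why the limit law in $(i)$ is $\mathcal{G}_{q^{n^2}}(0,\sigma_q^2)$ rather than $\mathcal{G}_{q^{n^2}}(0,1)$; a harmless rescaling $f_k\mapsto f_k/\|f_k\|$ (or $\mapsto \sigma_q^{-1}f_k$, depending on the exact formulation in Section \ref{section:proof}) reduces matters to the unit-variance normalization used in Theorem \ref{theo:intro}. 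Once $(iii)$ is in hand, I apply the reverse implication $(iii)\Rightarrow(i)_{q'}$ of the same theorem for an arbitrary $q'\in[0,1]$ to obtain convergence of $I_n^{X^{(q')}}(f_k)$ to $\mathcal{G}_{q'^{\,n^2}}(0,\sigma_{q'}^2)$. This proves $(i)\Rightarrow(ii)$; the converse $(ii)\Rightarrow(i)$ is immediate since $(ii)$ in particular entails $(i)$ for, say, $q=0$ (the free case), or indeed for any chosen value.

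The one genuine point requiring care — rather than an outright obstacle, since the heavy lifting is done in Theorem \ref{theo:main} — is to make sure the third, $q$-independent assertion $(iii)$ is formulated symmetrically enough that the passage from $q_0$ to $q'$ is completely lossless, i.e. that no residual $q_0$-dependence sneaks into the constants controlling the contractions. In the free and classical cases the relevant condition is literally ``all nontrivial contractions of $f_k$ with itself tend to $0$'', a statement about $\{f_k\}$ alone; the content of the $q$-analysis in Section \ref{section:proof} is precisely that the combinatorial weights $q^{\mathrm{Cr}(\pi)}$ appearing in the fourth moment reorganize so that the {\it same} contraction condition is equivalent to the fourth-moment convergence for every $q\in[0,1]$, with the limit's defining parameter shifting from $q$ to $q^{n^2}$ (which, as noted after Theorem \ref{theo:intro}, is invisible at $q\in\{0,1\}$). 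Granting that, Theorem \ref{transferprinciple} is a one-line corollary: transitivity of ``$\Leftrightarrow$'' through the $q$-free hub $(iii)$. I would therefore present the proof as exactly that — cite Theorem \ref{theo:main}, extract $(iii)$, re-inject it — and defer every combinatorial estimate to Section \ref{section:proof}.
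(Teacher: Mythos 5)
Your proposal is correct and follows essentially the same route as the paper: the paper's proof of Theorem \ref{transferprinciple} is exactly the two-step argument $(i)_{q_0}\Rightarrow$ (contraction condition $(ii)$ of Theorem \ref{theo:main}, which is $q$-independent) $\Rightarrow (i)_{q'}$ for every $q'$. The only cosmetic difference is that you worry about a rescaling to unit variance, whereas the paper simply feeds $c(i,i)=\si_q^2$ directly into Theorem \ref{theo:main}; this changes nothing.
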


\

As a nice application of all the previous material, we offer the following theorem. (We will prove it in Section 3.)
For every $q\in [0,1]$, let us denote by $H^{(q)}_0,H^{(q)}_1,\ldots$ the sequence of \emph{$q$-Hermite polynomials}, determined by the recurrence
$$H_0^{(q)}(x)=1, \quad H_1^{(q)}(x)=x \quad \text{and} \quad xH_n^{(q)}(x)=H^{(q)}_{n+1}(x)+[n]_q H^{(q)}_{n-1},$$
where $[n]_q=\frac{1-q^n}{1-q}$ (with the convention that $[n]_1=n$). These polynomials are related to the $q$-Brownian motion $X^{(q)}$ through the formula
\begin{equation}\label{lien-tch}
H^{(q)}_n\big( I_1^{X^{(q)}}(e)\big)=I_n^{X^{(q)}}\big( e^{\otimes n} \big),\quad e\in L^2(\R_+),\quad \|e\|_{L^2(\R_+)}^2=1.
\end{equation}
We then have:

\

\begin{theorem}[$q$-version of the Breuer-Major theorem]\label{breuermajor}
Fix $q\in [0,1]$ and let $n\geq 1$.
Let $\{G_l\}_{l\in\N}$ be a $q$-Gaussian centered stationary family of random variables on some $W^\ast$-probability space $(\ca,\vp)$, meaning that there exists $\rho:\Z\to\R$ such that, for every integer $r\geq 1$ and every $l_1,\ldots,l_r\geq 1$, one has
\[
\vp\big( G_{l_1}\ldots G_{l_r}\big)=\sum_{\pi \in \mathcal{P}_2(\{1,\ldots,r\})} q^{\mathrm{Cr}(\pi)} \prod_{\{a,b\}\in \pi} \rho(l_a-l_b).
\]
Assume further that $\rho(0)=1$ (this just means that $G_l \sim \mathcal{G}_q(0,1)$ for every $l$) and
$\sum_{l\in \Z} |\rho(l)|^n$ is finite. Then, as $k\to\infty$,
\begin{equation}\label{thm19expr}
\left\{\frac{1}{\sqrt{k}} \sum_{l=0}^{[kt]} H_n^{(q)}(G_l)\right\}_{t\geq 0} \,\overset{\rm f.d.d.}{\to}\, \sqrt{\sum_{\si\in \mathfrak{S}_n} q^{\mathrm{inv}(\si)} \sum_{l\in \Z} \rho(l)^n}\,\left\{X_t^{(q^{n^2})}\right\}_{t\geq 0},
\end{equation}
where
`f.d.d.' stands for the convergence in law of all finite-dimensional distributions and
$X^{(q^{n^2})}$ is a $q^{n^2}$-Brownian motion.
\end{theorem}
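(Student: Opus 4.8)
The plan is to realize the stationary family $\{G_l\}$ on a single $q$-Fock space, to rewrite the partial sums in \eqref{thm19expr} as $n$th multiple $q$-integrals of explicit symmetric kernels, and then to apply the multidimensional fourth moment theorem (Theorem \ref{theo:main}) together with the transfer principle; this ultimately reduces the statement to a $q$-free analytic fact about $\rho$, namely the classical Breuer--Major estimate. For the realization, note that $[\rho(a-b)]_{a,b\in\Z}$ is a covariance matrix of $\{G_l\}$ (use \eqref{form-q-bm} with $r=2$ and the positivity of $\vp_q$), hence positive semi-definite, so one can choose functions $\{e_l\}_{l\in\Z}\subset L^2(\R_+)$ with $\langle e_l,e_m\rangle_{L^2(\R_+)}=\rho(l-m)$. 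By the natural bilinear extension of the Wick formula \eqref{form-q-bm}, the family $\{I_1^{X^{(q)}}(e_l)\}_l$ then has the same joint moments as $\{G_l\}$, and since convergence of finite-dimensional distributions only involves joint moments we may and do assume $G_l=I_1^{X^{(q)}}(e_l)$. As $\|e_l\|^2_{L^2(\R_+)}=\rho(0)=1$, relation \eqref{lien-tch} gives $H_n^{(q)}(G_l)=I_n^{X^{(q)}}(e_l^{\otimes n})$, whence, by linearity of the multiple integral, for all $t\geq 0$ and $k\geq 1$,
\[
\frac{1}{\sqrt{k}}\sum_{l=0}^{[kt]}H_n^{(q)}(G_l)=I_n^{X^{(q)}}(f_k^{(t)}),\qquad f_k^{(t)}:=\frac{1}{\sqrt{k}}\sum_{l=0}^{[kt]}e_l^{\otimes n}\in L^2(\R_+^n),
\]
each $f_k^{(t)}$ being symmetric.

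The next step is to identify the limiting covariance structure of the vectors $(I_n^{X^{(q)}}(f_k^{(t_1)}),\ldots,I_n^{X^{(q)}}(f_k^{(t_p)}))$, for fixed $0\leq t_1<\cdots<t_p$. For $0\leq s\leq t$,
\[
\langle f_k^{(s)},f_k^{(t)}\rangle_{L^2(\R_+^n)}=\frac{1}{k}\sum_{l=0}^{[ks]}\sum_{m=0}^{[kt]}\langle e_l,e_m\rangle_{L^2(\R_+)}^n=\frac{1}{k}\sum_{l=0}^{[ks]}\sum_{m=0}^{[kt]}\rho(l-m)^n,
\]
and, grouping pairs $(l,m)$ by their lag and using $\sum_{j\in\Z}|\rho(j)|^n<\infty$ together with dominated convergence, this tends to $(s\wedge t)\,c$ as $k\to\infty$, where $c:=\sum_{j\in\Z}\rho(j)^n\geq 0$; the non-negativity holds because $[\rho(l-m)^n]_{l,m}$, being the Gram matrix of the functions $e_l^{\otimes n}$, is positive semi-definite. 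In particular $\vp_q(I_n^{X^{(q)}}(f_k^{(t)})^2)=(\sum_{\si\in\mathfrak{S}_n}q^{\mathrm{inv}(\si)})\,\|f_k^{(t)}\|^2_{L^2(\R_+^n)}\to (\sum_{\si\in\mathfrak{S}_n}q^{\mathrm{inv}(\si)})\,c\,t$, and more generally the limiting covariance of $(I_n^{X^{(q)}}(f_k^{(t_i)}))_i$ coincides with that of $\sqrt{c\sum_{\si\in\mathfrak{S}_n}q^{\mathrm{inv}(\si)}}\,(X_{t_i}^{(q^{n^2})})_i$, i.e.\ with the covariance of the right-hand side of \eqref{thm19expr} at the times $t_1,\ldots,t_p$.

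It then remains to upgrade the convergence of covariances to convergence in law. By the multidimensional statement Theorem \ref{theo:main}, the convergence in law of $(I_n^{X^{(q)}}(f_k^{(t_1)}),\ldots,I_n^{X^{(q)}}(f_k^{(t_p)}))$ towards the $q^{n^2}$-Gaussian family with the covariance found above is equivalent to the third, $q$-independent assertion of that theorem, which amounts to the convergence of the inner products $\langle f_k^{(s)},f_k^{(t)}\rangle$ (established in the previous step) together with the vanishing, as $k\to\infty$, of the relevant contraction norms $\|f_k^{(t)}\contra{r}f_k^{(t)}\|$ for $1\leq r\leq n-1$. This last condition mentions neither $q$ nor the non-commutative structure, so it suffices to check it for one value of $q$; taking $q=1$, it is exactly the $L^2$-content of the \emph{classical} Breuer--Major theorem (that is, the case $q=1$ of the present statement), which holds under the assumption $\sum_l|\rho(l)|^n<\infty$. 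Alternatively one verifies it directly: using $\|f\contra{r}f\|=\|f\contra{n-r}f\|$ to reduce to $r\leq n/2$, one estimates
\[
\|f_k^{(t)}\contra{r}f_k^{(t)}\|^2=\frac{1}{k^2}\sum_{l,m,l',m'=0}^{[kt]}\rho(l-m)^r\,\rho(l'-m')^r\,\rho(l-l')^{n-r}\,\rho(m-m')^{n-r}\longrightarrow 0
\]
by the usual Breuer--Major argument (truncate $\rho$ at a large level, then use Young's convolution inequality). In either case the condition holds at $q=1$, hence for all $q\in[0,1]$, and Theorem \ref{theo:main} yields \eqref{thm19expr}.

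The main obstacle is precisely the vanishing of the contraction norms in the last step: this is the only genuinely analytic ingredient, and the point where the hypothesis $\sum_l|\rho(l)|^n<\infty$ is really used. The convenient feature is that, through Theorem \ref{theo:main}, this vanishing is a $q$-free fact and so need never be reproved for general $q$ — it is inherited from the classical ($q=1$) Breuer--Major theorem. Apart from that, the only steps deserving care are the transplantation of $\{G_l\}$ onto a single $q$-Fock space — where positive-definiteness of $\rho$ and the bilinear Wick formula \eqref{form-q-bm} are used — and the verification that the limiting object furnished by Theorem \ref{theo:main} is indeed the vector of finite-dimensional distributions of the scaled $q^{n^2}$-Brownian motion appearing in \eqref{thm19expr}.
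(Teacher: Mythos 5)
Your proposal is correct and follows essentially the same route as the paper: realize $G_l$ as $I_1^{X^{(q)}}(e_l)$ with $\langle e_l,e_m\rangle=\rho(l-m)$, rewrite the partial sums as $I_n^{X^{(q)}}$ of the symmetric kernels $\tfrac{1}{\sqrt{k}}\sum_l e_l^{\otimes n}$, identify the limiting $\langle\cdot,\cdot\rangle_q$-inner products, and feed the ($q$-independent) vanishing of the contraction norms into Theorem \ref{theo:main}$(ii)\Rightarrow(iv)$ to get the joint moments of the scaled $q^{n^2}$-Brownian motion. The only presentational difference is that the paper simply cites the literature for the two analytic facts (limiting inner products and vanishing contractions), whereas you either import the latter from the classical $q=1$ Breuer--Major theorem or sketch the standard direct estimate; both are fine.
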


The rest of the paper is divided into two sections. In Section \ref{section:2}, we recall and prove some useful results relative to the so-called \emph{$q$-Gaussian chaos}, which is nothing but a generalization of both the Wiener and Wigner chaoses. Notably, therein
we extend the formula (\ref{form-q-bm}) to the case of multiple integrals with respect to the $q$-Brownian motion (Theorem \ref{thm:form-mom}). Once endowed with this preliminary material, we devote Section \ref{section:proof} to the proofs of Theorems \ref{theo:intro}, \ref{transferprinciple} and \ref{breuermajor}.

\section{$q$-Brownian chaos and product formulae}\label{section:2}

Throughout this section, we fix a parameter $q\in (-1,1)$, as well as a $q$-Brownian motion $X^{(q)}$ on some $W^\ast$-probability space
$(\ca_q,\vp_q)$.
As a first step towards Theorem \ref{theo:intro}, our aim is to generalize the formula (\ref{form-q-bm}) to the case of multiple integrals with respect to $X^{(q)}$.

\subsection{Multiple integrals}\label{subsec:mult-int}

For every integer $n\geq 1$, the collection of all random variables of the type \[
I_n^{X^{(q)}}(f)=\int_{\R^n_+} f(t_1,\ldots,t_n) \,  dX^{(q)}_{t_1} \ldots dX^{(q)}_{t_n},\quad
f \in L^2(\mathbb{R}_+^n),
\]
is called the $n$th {\it $q$-Gaussian chaos} associated with $X^{(q)}$, and has been defined by Donati-Martin \cite{don-mar} along the same lines as the classical Wiener chaos (see, e.g., \cite{nua}), namely:

\smallskip

\noindent
- first define $I^{X^{(q)}}_n(f) = (X^{(q)}_{b_1} - X^{(q)}_{a_1})\ldots (X^{(q)}_{b_n} - X^{(q)}_{a_n})$ when $f$ has the form
\begin{equation}\label{e:simple}
f(t_1,...,t_n) = {\bf 1}_{(a_1,b_1)}(t_1)\times\ldots\times {\bf 1}_{(a_n,b_n)}(t_n),
\end{equation}
where the intervals $(a_i,b_i)$, $i=1,...,n$, are pairwise disjoint;

\smallskip

\noindent
- extend linearly the definition of $I^{X^{(q)}}_n(f)$ to the class $\mathcal{E}$ of simple functions vanishing on diagonals, that is, to functions $f$ that are finite
linear combinations of indicators of the type (\ref{e:simple});

\smallskip

\noindent
- observe that, for all simple functions $f\in L^2(\R_+^m)$ and $g\in L^2(\R_+^n)$ vanishing on diagonals,
\begin{equation}\label{isometrie}
\langle I^{X^{(q)}}_m(f),I^{X^{(q)}}_n(g) \rangle_{L^2(\mathcal{A}_q,\vp_q)}=
\varphi_q\left(I^{X^{(q)}}_m(f)^*I^{X^{(q)}}_n(g)\right)=\der_{m,n}
 \langle f,g \rangle_{q},
\end{equation}
where the sesquilinear form $\langle .,.\rangle_q$ is defined for all $f,g\in L^2(\R^n_+)$ by
\begin{equation}\label{prod-scal}
\langle f,g\rangle_q:=\sum_{\si \in \mathfrak{S}_n} q^{\mathrm{inv}(\si)}\int_{\R_+^n} f(t_{\si(1)},\ldots,t_{\si(n)})g(t_1,\ldots,t_n) \, dt_1 \ldots dt_n
\end{equation}
and where $\delta_{m,n}$ stands for the Kronecker symbol;

\smallskip

\noindent
- exploit the fact that the form $\langle .,.\rangle_q$ is strictly positive on $L^2(\R^n_+)$ (see \cite[Proposition 1]{BSp1}) in order to extend $I_n^{X^{(q)}}(f)$ to functions $f$ in the completion $\mathcal{F}_q$ of $\mathcal{E}$ with respect to $\langle .,.\rangle_q$. Observe finally that, owing to the estimate $\|f\|^2_q \leq \big( \sum_{\si\in \mathfrak{S}_n} q^{\mathrm{inv}(\si)} \big) \|f\|^2_{L^2(\R^n_+)}$, one can rely on the inclusion $L^2(\R^n_+) \subset \mathcal{F}_q$ for every $q\in (-1,1)$ and every $n\geq 1$.

\

Of course, relation (\ref{isometrie}) continues to hold for every pair $f \in L^2(\mathbb{R}^n_+)$  and $g\in L^2(\R_+^n)$. Moreover, the above sketched
construction implies that $I^{X^{(q)}}_n(f)$ is self-adjoint if and only if $f$ is \emph{mirror symmetric}, i.e., $f^\ast=f$ where $f^\ast(t_1,\ldots,t_n):=f(t_n,\ldots,t_1)$.

\

Let us now report one of the main results of \cite{don-mar}, namely the generalization of the product formula for multiple Wiener-It\^o integrals  to the $q$-Brownian motion case.
In the sequel, we adopt the following notation.

\

\textbf{Notation.} With every $f\in L^2(\R_+^n)$ and every $p\in \{1,\ldots,n\}$, we associate the function $f^{(p)}_q\in L^2(\R_+^n)$ along the formula
$$f^{(p)}_q(t_1,\ldots,t_{n-p},s_p,\ldots,s_1):=\sum_{\si:\{1,\ldots,p\} \to \{1,\ldots,n\} \searrow} q^{\al(\si)} f(t_1,\ldots,s_k,\ldots,s_1,\ldots,t_{n-k}),$$
where, in the right-hand-side, $\si$ is decreasing (this fact is written in symbols as $\sigma\searrow$), $s_i$ is at the place $\si(i)$, and
$$\al(\si):=\sum_{i=1}^p (n+1-\si(i))-\frac{p(p+1)}{2}.$$
Besides, we define another function $f^{[p]}_q\in L^2(\R_+^n)$ by
$$f^{[p]}_q(s_1,\ldots,s_p,t_1,\ldots,t_{n-k}):=\sum_{\si:\{1,\ldots,p\} \to \{1,\ldots,n\}} q^{\beta(\si)}f(t_1,\ldots,s_i,\ldots t_{n-p}),$$
where, in the right-hand-side, $s_i$ is at the place $\si(i)$ and
$$\beta(\si):=\sum_{i=1}^p \si(i)-\frac{p(p+1)}{2}+\mathrm{inv}(\si).$$
(See Theorem \ref{theo:intro} for the definition of $\mathrm{inv}(\si)$.)

\

We now introduce the central concept of {\it contractions}.

\begin{definition}
Fix $n,m\geq 1$ as well as $p\in \{1,\ldots,\min(m,n)\}$. Let $f\in L^2(\R_+^n)$ and $g\in L^2(\R_+^m)$.\\
1.  The $p$th contraction $f\contra{p}g \in L^2(\R_+^{m+n-2p})$ of $f$ and $g$ is defined by the formula
\begin{multline*}
f\contra{p} g(t_1,\ldots,t_{m+n-2p})\\=\int_{\R_+^p}  f(t_1,\ldots,t_{n-p},s_p,\ldots,s_1)g(s_1,\ldots,s_p,t_{n-p+1},\ldots,t_{m+n-2p})ds_1 \ldots ds_p.
\end{multline*}
2. The $p$th $q$-contraction $f \pt{p}g \in L^2(\R_+^{m+n-2p})$ of $f$ and $g$ is defined by the formula
\[
f\pt{p} g=f^{(p)}_q \contra{p} g^{[p]}_q.
\]
3. We also set $f\pt{0} g=f\contra{0}g=f\otimes g$.
\end{definition}

These contractions appear naturally in the product formula for multiple integrals with respect to the $q$-Brownian motion, that we
state now.

\begin{theorem}[Donati-Martin] Let $f\in L^2(\R_+^n)$ and $g\in L^2(\R_+^m)$ with $n,m\geq 1$. Then
\begin{equation}\label{mul}
I_n^{X^{(q)}}(f)I_m^{X^{(q)}}(g)=\sum_{p=0}^{\min(n,m)} I_{n+m-2p}^{X^{(q)}}\big(f\pt{p} g\big).
\end{equation}
\end{theorem}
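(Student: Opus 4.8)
The plan is to establish the product formula (\ref{mul}) by the standard three-step strategy used for multiple Wiener–It\^o and Wigner integrals: first verify it on simple functions vanishing on diagonals, then argue that both sides define continuous maps (in the appropriate norms) so the identity extends by density to all of $L^2(\R_+^n)\times L^2(\R_+^m)$. Concretely, I would first reduce to the case where $f = {\bf 1}_{A_1}\otimes\cdots\otimes{\bf 1}_{A_n}$ and $g = {\bf 1}_{B_1}\otimes\cdots\otimes{\bf 1}_{B_m}$ with $\{A_i\}$ pairwise disjoint and $\{B_j\}$ pairwise disjoint, so that $I_n^{X^{(q)}}(f) = (X_{A_1})\cdots(X_{A_n})$ and $I_m^{X^{(q)}}(g) = (X_{B_1})\cdots(X_{B_m})$ where $X_A := X_{b}-X_{a}$ for $A=(a,b)$. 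By further subdividing the intervals (using that a multiple integral is unchanged when one refines the underlying partition, a fact that follows from (\ref{isometrie})), one may assume that for each pair $(i,j)$ the sets $A_i$ and $B_j$ are either equal or disjoint.

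The heart of the computation is then the purely algebraic manipulation of the product $X_{A_1}\cdots X_{A_n} X_{B_1}\cdots X_{B_m}$ using the defining moment formula (\ref{form-q-bm}), or equivalently the $q$-commutation relations $a(e)a^\ast(f) - q\, a^\ast(f)a(e) = \langle e,f\rangle\,\mathrm{Id}$ on the $q$-Fock space that underlies Theorem \ref{bs}. Writing each $X_A = a(\1_A) + a^\ast(\1_A)$ and expanding, one moves annihilation operators to the right past creation operators; each time an annihilator from the second block $g$ crosses a creator from the first block $f$ it either produces a scalar $\langle\cdot,\cdot\rangle$ (a contraction of one variable) or a factor $q$ (recording a "crossing"). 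After bookkeeping all the ways $p$ of the $n$ slots of $f$ can be paired with $p$ of the $m$ slots of $g$, together with the powers of $q$ accumulated from the commutations, the surviving terms reorganize exactly into $\sum_{p} I_{n+m-2p}^{X^{(q)}}(f\pt{p}g)$, where the weights $q^{\alpha(\sigma)}$ and $q^{\beta(\sigma)}$ in the definitions of $f^{(p)}_q$ and $g^{[p]}_q$ are precisely the crossing-counts produced by the decreasing rearrangement of the contracted indices and by moving them through the remaining letters. This is the step I expect to be the main obstacle: it is entirely a matter of careful combinatorial accounting of the inversions/crossings, and getting the exponents $\alpha(\sigma)$, $\beta(\sigma)$ to match on the nose requires organizing the $q$-commutation bookkeeping cleanly (e.g. by induction on $n+m$, peeling off one factor $X_{A_1}$ or $X_{B_m}$ at a time and using the recursion the contractions satisfy).

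Once the identity is proved for such fully-refined simple $f,g$, I would extend it by density. Both sides of (\ref{mul}), viewed as elements of $L^2(\ca_q,\vp_q)$, are controlled by the $\langle\cdot,\cdot\rangle_q$-norms of $f$ and $g$: the left side is a product of two bounded operators with operator-norm bounds in terms of $\|f\|_q,\|g\|_q$ (or one may simply use that $f\mapsto I_n^{X^{(q)}}(f)$ is $L^2$-bounded via (\ref{isometrie}) together with the estimate $\|f\|_q^2\le(\sum_{\sigma}q^{\mathrm{inv}(\sigma)})\|f\|_{L^2}^2$), and each $q$-contraction $f\pt{p}g$ depends continuously on $(f,g)$ in the $L^2$-norms, since $f^{(p)}_q$ and $g^{[p]}_q$ are finite sums of rearrangements of $f$ and $g$ and the ordinary contraction $\contra{p}$ is bounded $L^2(\R_+^n)\times L^2(\R_+^m)\to L^2(\R_+^{n+m-2p})$ by Cauchy–Schwarz. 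Hence, taking sequences of simple functions converging to $f$ and to $g$ in $L^2$, both sides of (\ref{mul}) converge in $L^2(\ca_q,\vp_q)$ to the corresponding expressions for $f$ and $g$, which completes the proof. (Since this is the statement of a theorem due to Donati-Martin \cite{don-mar}, one may alternatively just cite that reference for the full details; the sketch above is the route I would take to reprove it.)
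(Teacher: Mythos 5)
The paper does not prove this statement at all: it is quoted verbatim as a result of Donati-Martin, with the reader referred to \cite{don-mar} for the proof. So there is no in-paper argument to compare yours against. That said, your sketch follows what is essentially Donati-Martin's own route (and the standard one for Wiener and Wigner chaoses): prove the identity for simple functions vanishing on diagonals by writing each increment as $a(\1_A)+a^\ast(\1_A)$ on the $q$-Fock space, use the $q$-commutation relation to move annihilators past creators while recording contractions and powers of $q$, and then extend by density. Two caveats. First, the step you yourself flag as ``the main obstacle'' --- verifying that the accumulated powers of $q$ from the commutations reproduce exactly the exponents $\al(\si_1)+\beta(\si_2)$ appearing in $f^{(p)}_q$ and $g^{[p]}_q$ --- is the entire content of the theorem, and your proposal does not carry it out; as written this is a plan rather than a proof. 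Second, in the density step, $L^2(\ca_q,\vp_q)$-boundedness of $f\mapsto I_n^{X^{(q)}}(f)$ alone does not let you pass to the limit in the \emph{product} $I_n^{X^{(q)}}(f)I_m^{X^{(q)}}(g)$; you need the operator-norm control you mention (a Haagerup-type inequality for $q$-Gaussian chaoses), or else you should verify the identity weakly by pairing both sides against multiple integrals of simple functions and using the isometry. Since the result is anyway attributed to \cite{don-mar}, citing that reference, as you suggest at the end, is the appropriate resolution here.
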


\subsection{Respecting pairings}\label{subsec:pairings}

As in \cite{knps}, the notion of a \emph{respecting pairing} will play a prominent role in our study.

\begin{definition}
Let $n_1,\ldots,n_r$ be positive integers and $n=n_1+\ldots+n_r$. The set $\{1,\ldots,n\}$ is then partitioned accordingly as $\{1,\ldots,n\}=B_1 \cup B_2 \cup \ldots \cup B_r$, where $B_1=\{1,\ldots,n_1\}$, $B_2=\{n_1+1,\ldots,n_1+n_2\}$, $\ldots$, $B_r=\{n_1+\ldots+n_{r_1}+1,\ldots,n\}$. We denote this partition by $n_1 \otimes \ldots \otimes n_r$, and we will refer to the sets $B_i$ as the \emph{blocks} of $n_1\otimes \ldots \otimes n_r$.\\
Then, we say that a pairing $\pi \in \mathcal{P}_2(\{1,\ldots,n\})$ \emph{respects} $n_1\otimes \ldots \otimes n_r$ if every pair $\{l,m\}\in \pi$ is such that $l\in B_i$ and $m\in B_j$ with $i\neq j$. In the sequel, the subset of such respecting pairings in $\mathcal{P}_2(\{1,\ldots,n\})$ will be denoted as $C_2(n_1\otimes \ldots \otimes n_r)$.\\
Finally, given $\pi \in C_2(n_1\otimes \ldots \otimes n_r)$ and functions $f^1\in L^2(\R_+^{n_1}),\ldots,f^r \in L^2(\R_+^{n_r})$, we define the \emph{pairing integral}
\begin{multline}\label{pairing-int}
\int_\pi f_1 \otimes \ldots \otimes f_r
:=\int_{\R_+^n} dt_1 \ldots dt_n\\ \times f_1(t_1,\ldots,t_{n_1}) f_2(t_{n_1+1},\ldots,t_{n_1+n_2})\ldots f_r(t_{n_1+\ldots+n_{r-1}+1}, \ldots,t_n)\prod_{\{i,j\}\in \pi} \der(t_i-t_j),
\end{multline}
where $\der$ stands for a Dirac mass at $0$.
\end{definition}

For instance, consider the following pairing
$$\pi:=\{(1,4),(2,8),(3,6),(5,9),(7,11),(10,12)\}$$
as an element of $C_2(3\otimes 4 \otimes 3 \otimes 2)$. Then it is readily checked that
$$\int_\pi f_1 \otimes f_2 \otimes f_3 \otimes f_4=\int_{\R_+^6}  f_1(t_1,t_2,t_3)f_2(t_1,t_4,t_3,t_5)f_3(t_2,t_4,t_6)f_4(t_5,t_6) dt_1dt_2dt_3dt_4dt_5 dt_6.$$

\begin{lemma}
Let $f,g\in L^2(\R_+^n)$ and
recall the definition (\ref{prod-scal}) of $\langle f,g\rangle_q$.
We have
\begin{equation}\label{prod-scal-2}
\langle f,g\rangle_q=\sum_{\si \in \mathfrak{S}_n} q^{\mathrm{inv}(\si)}\int_{P_2(\si)} f\otimes g^\ast
\end{equation}
where, for every $\si\in \mathfrak{S}_n$, the pairing $P_2(\si) \in C_2(n\otimes n)$ is explicitly given by
$$P_2(\si):=\{(n+1-i,n+\si(i)), \ 1\leq i\leq n\}.$$
\end{lemma}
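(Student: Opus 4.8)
The plan is to unravel both sides of \eqref{prod-scal-2} directly from the definitions and match them term by term indexed by $\si\in\mathfrak{S}_n$. First I would recall that, by \eqref{prod-scal},
\[
\langle f,g\rangle_q=\sum_{\si\in\mathfrak{S}_n} q^{\mathrm{inv}(\si)}\int_{\R_+^n} f(t_{\si(1)},\ldots,t_{\si(n)})\,g(t_1,\ldots,t_n)\,dt_1\cdots dt_n,
\]
so it suffices to show, for each fixed $\si$, that
\[
\int_{\R_+^n} f(t_{\si(1)},\ldots,t_{\si(n)})\,g(t_1,\ldots,t_n)\,dt_1\cdots dt_n
=\int_{P_2(\si)} f\otimes g^\ast .
\]

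The key step is to write out the right-hand side from the definition \eqref{pairing-int} of a pairing integral on $C_2(n\otimes n)$. Introducing integration variables $u_1,\ldots,u_n$ for the first block and $v_1,\ldots,v_n$ for the second block, one has
\[
\int_{P_2(\si)} f\otimes g^\ast=\int_{\R_+^{2n}} f(u_1,\ldots,u_n)\,g^\ast(v_1,\ldots,v_n)\prod_{\{i,j\}\in P_2(\si)}\der(w_i-w_j)\,du\,dv,
\]
where $(w_1,\ldots,w_{2n})=(u_1,\ldots,u_n,v_1,\ldots,v_n)$. By the stated formula $P_2(\si)=\{(n+1-i,\,n+\si(i)):1\le i\le n\}$, each Dirac mass identifies $w_{n+1-i}=u_{n+1-i}$ with $w_{n+\si(i)}=v_{\si(i)}$; integrating out the $v$-variables therefore forces $v_{\si(i)}=u_{n+1-i}$ for all $i$, i.e.\ $v_j=u_{n+1-\si^{-1}(j)}$. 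Substituting $g^\ast(v_1,\ldots,v_n)=g(v_n,\ldots,v_1)$ and then relabelling $u_{n+1-k}=t_k$ (so that $u_\ell=t_{n+1-\ell}$), one checks that $g^\ast$ becomes $g(t_1,\ldots,t_n)$ and $f(u_1,\ldots,u_n)=f(t_n,\ldots,t_1)$, while the constraint rewrites the argument of $f$ precisely as $(t_{\si(1)},\ldots,t_{\si(n)})$ after accounting for the reversal; this reduces the integral to $\int_{\R_+^n} f(t_{\si(1)},\ldots,t_{\si(n)})\,g(t_1,\ldots,t_n)\,dt$, as desired.

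The main obstacle I anticipate is purely bookkeeping: keeping straight the two reversals (one coming from $g^\ast$ versus $g$, one from the index convention $n+1-i$ in the definition of $P_2(\si)$) and verifying that they combine to produce $f(t_{\si(1)},\ldots,t_{\si(n)})$ rather than $f(t_{\si^{-1}(n)},\ldots,t_{\si^{-1}(1)})$ or some other permuted variant. The cleanest way to handle this is to test the claimed identity on simple tensor functions $f=e_1\otimes\cdots\otimes e_n$ and $g=h_1\otimes\cdots\otimes h_n$, where both sides become explicit products of inner products $\langle e_{\si(i)},h_i\rangle_{L^2(\R_+)}$ (with an analogous product on the pairing side after the Dirac identifications), and then invoke density of such tensors in $L^2(\R_+^n)$ to conclude in general. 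I would carry out the verification on tensors to pin down the combinatorics, then state the general case by linearity and continuity of both sides in $f$ and $g$.
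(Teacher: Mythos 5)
There is a genuine gap: the term-by-term identity you propose to prove, namely
\[
\int_{\R_+^n} f(t_{\si(1)},\ldots,t_{\si(n)})\,g(t_1,\ldots,t_n)\,dt
\;=\;\int_{P_2(\si)} f\otimes g^\ast
\]
for the \emph{same} $\si$ on both sides, is false in general (it fails as soon as $n\geq 3$). Carrying out your own computation carefully: the Dirac masses force $v_j=u_{n+1-\si^{-1}(j)}$, so after writing $g^\ast(v_1,\ldots,v_n)=g(v_n,\ldots,v_1)$ and changing variables one finds
\[
\int_{P_2(\si)} f\otimes g^\ast=\int_{\R_+^n} f(t_{\widetilde{\si}(1)},\ldots,t_{\widetilde{\si}(n)})\,g(t_1,\ldots,t_n)\,dt,
\qquad \widetilde{\si}(i):=n+1-\si(n+1-i),
\]
i.e.\ the pairing integral attached to $\si$ matches the $\widetilde{\si}$-term of $\langle f,g\rangle_q$, not the $\si$-term. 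A concrete counterexample to your claim: for $n=3$ and $\si$ the transposition of $1$ and $2$, one gets $\int_{P_2(\si)}f\otimes g^\ast=\int f(t_1,t_2,t_3)g(t_1,t_3,t_2)\,dt$, whereas the $\si$-term of $\langle f,g\rangle_q$ is $\int f(t_1,t_2,t_3)g(t_2,t_1,t_3)\,dt$; these differ for generic $f,g$ (your tensor test $f=e_1\otimes\cdots\otimes e_n$, $g=h_1\otimes\cdots\otimes h_n$ would have exposed this, since it yields $\prod_j\langle e_{(\widetilde{\si})^{-1}(j)},h_j\rangle$ rather than $\prod_j\langle e_{\si^{-1}(j)},h_j\rangle$). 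Your flagged worry about ending up with ``some other permuted variant'' is exactly where the argument breaks.

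The missing idea --- and the way the paper closes the argument --- is to re-index the sum by the map $\si\mapsto\widetilde{\si}$ and to check two facts: it is a bijection of $\mathfrak{S}_n$ (indeed an involution), and it preserves the number of inversions, $\mathrm{inv}(\widetilde{\si})=\mathrm{inv}(\si)$. The second point is essential because the two sides carry the weight $q^{\mathrm{inv}(\si)}$, so matching the integrals up to a permutation of the index set is not enough; the weights must be matched as well. With those two observations, $\sum_\si q^{\mathrm{inv}(\si)}\int_{P_2(\si)}f\otimes g^\ast=\sum_\si q^{\mathrm{inv}(\widetilde{\si})}\int_{P_2(\widetilde{\si})}f\otimes g^\ast=\sum_\si q^{\mathrm{inv}(\si)}\int f(t_{\si(1)},\ldots,t_{\si(n)})g(t_1,\ldots,t_n)\,dt$, and the lemma follows. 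Your overall strategy (unravel the pairing integral, verify on tensors, conclude by density) is otherwise sound, but as written the proof does not go through without this re-indexing step.
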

\begin{proof}
With each $\si \in \mathfrak{S}_n$, we may associate $\widetilde{\sigma}\in \mathfrak{S}_n$ given by $\widetilde{\sigma}(i)=n+1-\si(n+1-i)$.
We then have, by the definition (\ref{pairing-int}),
\begin{eqnarray*}
\int_{P_2(\widetilde{\sigma})} f\otimes g^\ast
&=&\int_{\R_+^n} f(s_1,\ldots,s_{n}) g^\ast(s_{n+1-\widetilde{\sigma}^{-1}(1)},\ldots,s_{n+1-\widetilde{\sigma}^{-1}(n)})ds_1 \ldots ds_n\\
&=&\int_{\R_+^n} f(s_1,\ldots,s_{n}) g(s_{n+1-\widetilde{\sigma}^{-1}(n)},\ldots,s_{n+1-\widetilde{\sigma}^{-1}(1)})ds_1 \ldots ds_n\\
&=&\int_{\R_+^n} f(t_{\si(1)},\ldots,t_{\si(n)}) g(t_{\si(n+1-\widetilde{\sigma}^{-1}(n))},\ldots,t_{\si(n+1-\widetilde{\sigma}^{-1}(1))})dt_1 \ldots dt_n.
\end{eqnarray*}
Now, we observe that $\si(n+1-\widetilde{\si}^{-1}(i))=n+1-\widetilde{\si}(\widetilde{\si}^{-1}(i))=n+1-i$, so that $\si(n+1-\widetilde{\si}^{-1}(n+1-i))=i$ for any $i$.
We deduce that
\[
\int_{P_2(\widetilde{\sigma})} f\otimes g^\ast
=\int_{\R_+^n} f(t_{\si(1)},\ldots,t_{\si(n)}) g(t_{1},\ldots,t_{n})dt_1 \ldots dt_n.
\]
Thus, since it is further readily checked that $\mathrm{inv}(\widetilde{\sigma})=\mathrm{inv}(\si)$
and that $\si\mapsto\widetilde{\si}$ is an involution, we get
\begin{eqnarray*}
\sum_{\si \in \mathfrak{S}_n} q^{\mathrm{inv}(\si)}\int_{P_2(\si)} f\otimes g^\ast &=&\sum_{\si \in \mathfrak{S}_n} q^{\mathrm{inv}(\si)}\int_{P_2(\widetilde{\si})} f\otimes g^\ast\\
& =&
\sum_{\si \in \mathfrak{S}_n} q^{\mathrm{inv}(\si)}\int_{\R_+^n} f(t_{\si(1)},\ldots,t_{\si(n)}) g(t_{1},\ldots,t_{n})dt_1 \ldots dt_n\\
&=&\langle f,g\rangle_q.
\end{eqnarray*}
\end{proof}

\subsection{Joint moments of multiple integrals}\label{subsec:mom}

Let us eventually turn to the main concern of this section, that is, to the extension of (\ref{form-q-bm}) for multiple integrals $I_{n_1}^{X^{(q)}}(f^1),\ldots,I_{n_r}^{X^{(q)}}(f^r)$. To achieve this goal, we focus on the following construction procedure.

\smallskip

Fix some positive integers $n_1,\ldots,n_r$ with $r\geq 3$, as well as $p\in \{1,\ldots,\min(n_1,n_2)\}$. Then, given $\pi'\in C_2((n_1+n_2-2p)\otimes n_3 \otimes \ldots \otimes n_r)$, $\si_1:\{1,\ldots, p\} \to \{1,\ldots,n_1\} \searrow$ and $\si_2:\{1,\ldots,p\} \to \{1,\ldots,n_2\}$, we construct a pairing $\pi =F(\si_1,\si_2,\pi')\in C_2(n_1\otimes n_2\otimes \ldots \otimes n_r) $ as follows (see Figure 1
for an illustration):\\

\noindent
1) In $\pi$, the first two blocks $\{1,\ldots,n_1\}$ and $\{n_1+1,\ldots,n_1+n_2\}$ are connected via exactly $p$ pairs given by
$$\big\{ (\si_1(i),n_1+\si_2(i)), \ 1\leq i\leq p\big\}.$$
2) The interactions between the $n_1+n_2-2p$ remaining points in $\{1,\ldots,n_1+n_2\}$ and the set $\{n_1+n_2+1,\ldots , n_1+\ldots, n_r\}$, as well as the interactions within $\{n_1+n_2+1,\ldots , n_1+\ldots, n_r\}$, are governed along $\pi'$.

\begin{center}
\begin{figure}[!ht]\label{pic:constr}

\begin{pspicture}(0,0)(12.8,13)


\psline(-0.2,4.8)(3.2,4.8) \psline(-0.2,4.8)(-0.2,5) \psline(3.2,4.8)(3.2,5)

\psline(3.8,4.8)(6,4.8) \psline(3.8,4.8)(3.8,5) \psline(6,4.8)(6,5)

\psline(6.6,4.8)(8.8,4.8) \psline(6.6,4.8)(6.6,5) \psline(8.8,4.8)(8.8,5)

\psline(-0.2,-0.2)(3.2,-0.2) \psline(-0.2,-0.2)(-0.2,0) \psline(3.2,-0.2)(3.2,0)

\psline(3.8,-0.2)(7.2,-0.2) \psline(3.8,-0.2)(3.8,0) \psline(7.2,-0.2)(7.2,0)

\psline(7.8,-0.2)(10,-0.2) \psline(7.8,-0.2)(7.8,0) \psline(10,-0.2)(10,0)

\psline(10.6,-0.2)(12.8,-0.2) \psline(10.6,-0.2)(10.6,0) \psline(12.8,-0.2)(12.8,0)

\psline(-0.2,9.4)(5.2,9.4) \psline(-0.2,9.4)(-0.2,9.6) \psline(5.2,9.4)(5.2,9.6)

\psline(6.8,9.4)(12.2,9.4) \psline(6.8,9.4)(6.8,9.6) \psline(12.2,9.4)(12.2,9.6)

\psline[linecolor=blue](1,10)(1,13) \psline[linecolor=blue](1,13)(9,13) \psline[linecolor=blue](9,13)(9,10)

\psline[linecolor=blue](3,10)(3,12) \psline[linecolor=blue](3,12)(7,12) \psline[linecolor=blue](7,12)(7,10)

\psline[linecolor=blue](5,10)(5,11) \psline[linecolor=blue](5,11)(11,11) \psline[linecolor=blue](11,11)(11,10)

\rput(1,9.8){\tiny{$\si_1(3)$}} \rput(3,9.8){\tiny{$\si_1(2)$}} \rput(5,9.8){\tiny{$\si_1(1)$}}

\rput(9,9.8){\tiny{$6+\si_2(3)$}} \rput(7,9.8){\tiny{$6+\si_2(2)$}} \rput(11,9.8){\tiny{$6+\si_2(1)$}}

\rput(0,10){$\bullet$} \rput(1,10){$\bullet$} \rput(2,10){$\bullet$} \rput(3,10){$\bullet$} \rput(4,10){$\bullet$} \rput(5,10){$\bullet$}
\rput(7,10){$\bullet$} \rput(8,10){$\bullet$} \rput(9,10){$\bullet$} \rput(10,10){$\bullet$} \rput(11,10){$\bullet$} \rput(12,10){$\bullet$}

\psline[linecolor=green](0,5)(0,8) \psline[linecolor=green](0,8)(7.4,8) \psline[linecolor=green](7.4,8)(7.4,5)

\psline[linecolor=green](0.6,5)(0.6,7.6) \psline[linecolor=green](0.6,7.6)(4.6,7.6) \psline[linecolor=green](4.6,7.6)(4.6,5)

\psline[linecolor=green](1.2,5)(1.2,7.2) \psline[linecolor=green](1.2,7.2)(5.8,7.2) \psline[linecolor=green](5.8,7.2)(5.8,5)

\psline[linecolor=green](1.8,5)(1.8,6.8) \psline[linecolor=green](1.8,6.8)(8,6.8) \psline[linecolor=green](8,6.8)(8,5)

\psline[linecolor=green](2.4,5)(2.4,6.4) \psline[linecolor=green](2.4,6.4)(8.6,6.4) \psline[linecolor=green](8.6,6.4)(8.6,5)

\psline[linecolor=green](3,5)(3,6) \psline[linecolor=green](3,6)(4,6) \psline[linecolor=green](4,6)(4,5)

\psline[linecolor=green](5.2,5)(5.2,6) \psline[linecolor=green](5.2,6)(6.8,6) \psline[linecolor=green](6.8,6)(6.8,5)

\rput(0,5){$\bullet$} \rput(0.6,5){$\bullet$} \rput(1.2,5){$\bullet$} \rput(1.8,5){$\bullet$} \rput(2.4,5){$\bullet$}
\rput(3,5){$\bullet$} \rput(4,5){$\bullet$} \rput(4.6,5){$\bullet$} \rput(5.2,5){$\bullet$} \rput(5.8,5){$\bullet$}
\rput(6.8,5){$\bullet$} \rput(7.4,5){$\bullet$} \rput(8,5){$\bullet$} \rput(8.6,5){$\bullet$}

\psline[linecolor=green](0,0)(0,4) \psline[linecolor=green](0,4)(11.4,4) \psline[linecolor=green](11.4,4)(11.4,0)

\psline[linecolor=blue](0.6,0)(0.6,3.6) \psline[linecolor=blue](0.6,3.6)(5.2,3.6) \psline[linecolor=blue](5.2,3.6)(5.2,0)

\psline[linecolor=green](1.2,0)(1.2,3.2) \psline[linecolor=green](1.2,3.2)(8.6,3.2) \psline[linecolor=green](8.6,3.2)(8.6,0)

\psline[linecolor=blue](1.8,0)(1.8,2.8) \psline[linecolor=blue](1.8,2.8)(4,2.8) \psline[linecolor=blue](4,2.8)(4,0)

\psline[linecolor=green](2.4,0)(2.4,2.4) \psline[linecolor=green](2.4,2.4)(9.8,2.4) \psline[linecolor=green](9.8,2.4)(9.8,0)

\psline[linecolor=blue](3,0)(3,2) \psline[linecolor=blue](3,2)(6.4,2) \psline[linecolor=blue](6.4,2)(6.4,0)

\psline[linecolor=green](4.6,0)(4.6,1.6) \psline[linecolor=green](4.6,1.6)(12,1.6) \psline[linecolor=green](12,1.6)(12,0)

\psline[linecolor=green](5.8,0)(5.8,1.2) \psline[linecolor=green](5.8,1.2)(12.6,1.2) \psline[linecolor=green](12.6,1.2)(12.6,0)

\psline[linecolor=green](7,0)(7,0.8) \psline[linecolor=green](7,0.8)(8,0.8) \psline[linecolor=green](8,0.8)(8,0)

\psline[linecolor=green](9.2,0)(9.2,0.8) \psline[linecolor=green](9.2,0.8)(10.8,0.8) \psline[linecolor=green](10.8,0.8)(10.8,0)

\rput(0,0){$\bullet$} \rput(0.6,0){$\bullet$} \rput(1.2,0){$\bullet$} \rput(1.8,0){$\bullet$} \rput(2.4,0){$\bullet$} \rput(3,0){$\bullet$}
\rput(4,0){$\bullet$} \rput(4.6,0){$\bullet$} \rput(5.2,0){$\bullet$} \rput(5.8,0){$\bullet$} \rput(6.4,0){$\bullet$} \rput(7,0){$\bullet$}
\rput(8,0){$\bullet$} \rput(8.6,0){$\bullet$} \rput(9.2,0){$\bullet$} \rput(9.8,0){$\bullet$}
\rput(10.8,0){$\bullet$} \rput(11.4,0){$\bullet$} \rput(12,0){$\bullet$} \rput(12.6,0){$\bullet$}

\end{pspicture}

\caption{Construction of a pairing $\pi=F(\si_1,\si_2,\pi')\in C_2(6\otimes 6 \otimes 4 \otimes 4)$ (third graph) from $\si_1:\{1,2,3\} \to \{1,\ldots,6\} \searrow$, $\si_2:\{1,2,3\} \to \{1,\ldots,6\}$ (first graph) and $\pi'\in C_2(6\otimes 4\otimes 4)$ (second graph).}
\end{figure}
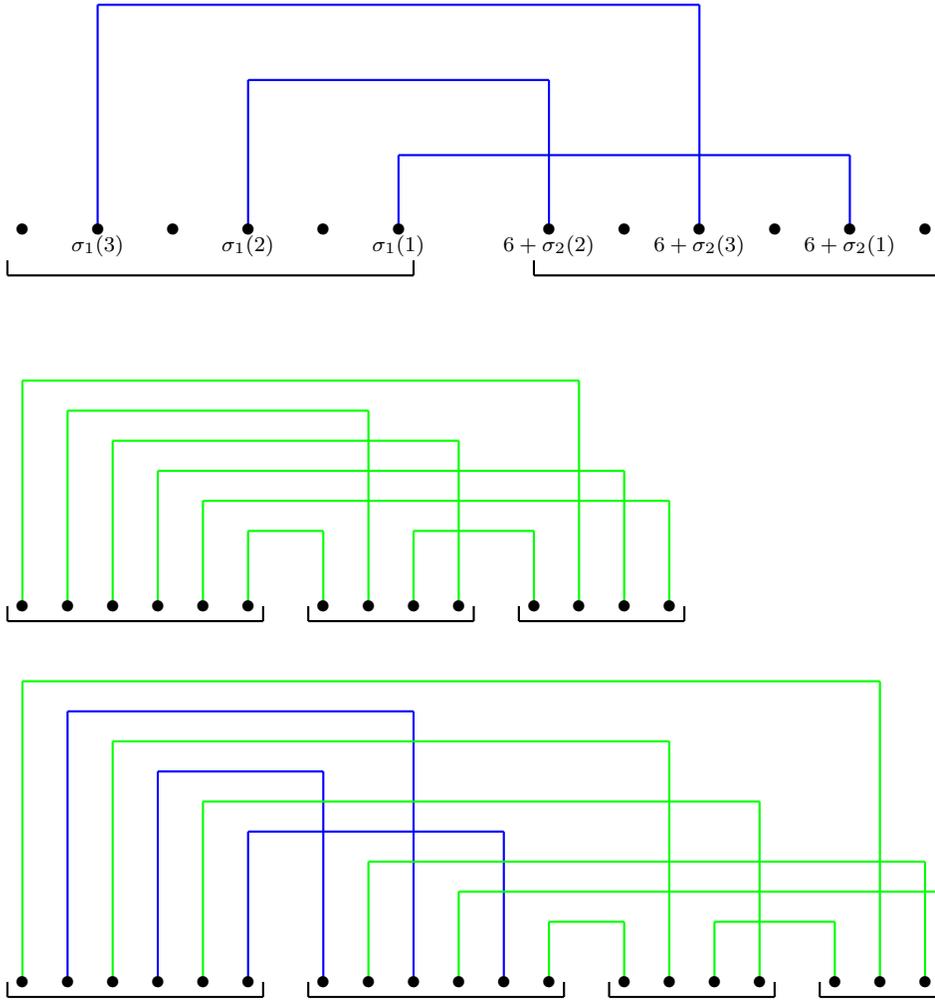
\end{center}

This construction is clearly a one-to-one procedure. That is, given a pairing $\pi \in C_2(n_1\otimes n_2\otimes \ldots \otimes n_r)$ such that the first two blocks $\{1,\ldots,n_1\}$ and $\{n_1+1,\ldots,n_1+n_2\}$ are linked by (exactly) $p$ pairs with $p\in \{1,\ldots,\min(n_1,n_2)\}$, there exists a unique $\si_1:\{1,\ldots, p\} \to \{1,\ldots,n_1\} \searrow$, a unique $\si_2:\{1,\ldots,p\} \to \{1,\ldots,n_2\}$ and a unique pairing $\pi'\in C_2((n_1+n_2-2p)\otimes n_3 \otimes \ldots \otimes n_r)$ such that $\pi=F(\si_1,\si_2,\pi')$. Besides, by the very definition of the $q$-contraction $f\pt{p}g$, the following result is easily checked:

\begin{lemma}
Fix $p\in \{1,\ldots,\min(n_1,n_2)\}$ and $\pi' \in C_2((n_1+n_2-2p)\otimes n_3 \otimes \ldots \otimes n_r)$. Then, for all functions $f^1\in L^2(\R_+^{n_1}),\ldots,f^r \in L^2(\R_+^{n_r})$, one has
\begin{equation}\label{decomp}
\int_{\pi'} \big( f_1 \pt{p} f_2\big) \otimes f_3 \otimes \ldots \otimes f_r=\sum_{\substack{\si_1:\{1,\ldots,p\} \to \{1,\ldots,n_1\} \searrow\\ \si_2:\{1,\ldots,p\} \to \{1,\ldots,n_2\}}} q^{\al(\si_1)+\beta(\si_2)} \int_{F(\si_1,\si_2,\pi')} f_1\otimes f_2\otimes \ldots \otimes f_r.
\end{equation}
\end{lemma}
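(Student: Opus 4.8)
The plan is to verify the identity \eqref{decomp} by direct unfolding of both sides according to the definitions, matching term by term via the bijection $F$. First I would expand the left-hand side: by part 2 of the definition of contractions, $f_1\pt{p}f_2 = (f_1)_q^{(p)}\contra{p}(f_2)_q^{[p]}$, so $f_1\pt{p}f_2$ is itself a sum over a decreasing $\si_1:\{1,\dots,p\}\to\{1,\dots,n_1\}$ and an arbitrary $\si_2:\{1,\dots,p\}\to\{1,\dots,n_2\}$, with weight $q^{\al(\si_1)+\beta(\si_2)}$, of the function obtained by placing the $p$ integration variables $s_1,\dots,s_p$ into the slots $\si_1(i)$ of $f_1$ (in reversed order, because of the $s_p,\dots,s_1$ convention in the contraction) and into the slots $\si_2(i)$ of $f_2$, and then integrating $ds_1\cdots ds_p$. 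Plugging this into the pairing integral $\int_{\pi'}(\cdot)\otimes f_3\otimes\cdots\otimes f_r$ and interchanging the (finite) sum with the integral over $\pi'$, the left-hand side becomes $\sum_{\si_1\searrow,\si_2}q^{\al(\si_1)+\beta(\si_2)}$ times a single big integral.

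Next I would expand the right-hand side using the definition \eqref{pairing-int} of the pairing integral $\int_{F(\si_1,\si_2,\pi')}f_1\otimes\cdots\otimes f_r$: this is an integral over $\R_+^{n}$ ($n=n_1+\cdots+n_r$) of $f_1\otimes\cdots\otimes f_r$ against $\prod_{\{i,j\}\in F(\si_1,\si_2,\pi')}\der(t_i-t_j)$. The heart of the matter is to see that the Dirac constraints coming from $F(\si_1,\si_2,\pi')$ factor exactly as: (a) the $p$ pairs $(\si_1(i),n_1+\si_2(i))$, which force the $\si_1(i)$-th argument of $f_1$ and the $\si_2(i)$-th argument of $f_2$ to coincide — call the common value $s_i$ — and then integrating out these $s_i$ reproduces precisely the contraction $\contra{p}$; and (b) the remaining pairs, which by construction 2) are exactly the pairs of $\pi'$ acting on the $n_1+n_2-2p$ surviving slots (now carrying the arguments of $f_1\pt{p}f_2$) together with the blocks $n_3,\dots,n_r$. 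Carrying out the $s_i$-integrations on the right and renaming variables, one lands on the same big integral as on the left, with the same $q$-weight $q^{\al(\si_1)+\beta(\si_2)}$ since the $q$-powers $\al(\si_1)$ and $\beta(\si_2)$ are by definition the ones attached to $(f_1)_q^{(p)}$ and $(f_2)_q^{[p]}$.

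The main obstacle — and the only place real care is needed — is bookkeeping of the slot positions: one must check that the slots of $f_1$ \emph{not} hit by $\si_1$, listed in increasing order, and those of $f_2$ not hit by $\si_2$, occupy in $f_1\pt{p}f_2=(f_1)_q^{(p)}\contra{p}(f_2)_q^{[p]}$ exactly the positions $1,\dots,n_1+n_2-2p$ that $\pi'$ expects, and similarly that the reversed-order convention ($s_p,\dots,s_1$ in $\contra{p}$, versus $s_1,\dots,s_p$ assigned to the pairs $(\si_1(i),n_1+\si_2(i))$ with $\si_1\searrow$) is consistent. This is precisely what is illustrated in Figure 1, and once the positions are matched the equality of the two integrals is a change of variables. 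Since everything is a finite sum of absolutely convergent integrals (all functions lie in the relevant $L^2$ spaces and the contractions are well-defined in $L^2$ by the construction in Section \ref{subsec:mult-int}), no analytic subtlety arises, and the lemma follows. In fact, given the explicit one-to-one nature of the map $(\si_1,\si_2,\pi')\mapsto F(\si_1,\si_2,\pi')$ recorded just before the statement, the identity is, as claimed, "easily checked": it is really just the assertion that the combinatorial decomposition of a respecting pairing according to how its first two blocks interact is mirrored analytically by the definition of $\pt{p}$.
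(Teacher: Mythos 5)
Your proposal is correct and is essentially the argument the paper has in mind: the paper gives no written proof of this lemma, stating only that it is ``easily checked'' from the very definition of the $q$-contraction, and your unfolding of $f_1\pt{p}f_2=(f_1)_q^{(p)}\contra{p}(f_2)_q^{[p]}$ into the double sum over $\si_1\searrow$ and $\si_2$ with weights $q^{\al(\si_1)+\beta(\si_2)}$, followed by matching the $p$ Dirac constraints of $F(\si_1,\si_2,\pi')$ with the contraction and the remaining constraints with $\pi'$, is exactly that verification. The slot-bookkeeping you flag is indeed the only point requiring care, and your treatment of it is consistent with the conventions in the paper's construction of $F$.
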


Our second ingredient for the generalization of (\ref{form-q-bm}) lies in the following computation of the crossings in $F(\si_1,\si_2,\pi')$:

\begin{lemma}
Fix $p\in \{1,\ldots,\min(n_1,n_2)\}$, $\pi' \in C_2((n_1+n_2-2p)\otimes n_3 \otimes \ldots \otimes n_r)$, $\si_1:\{1,\ldots,p\} \to \{1,\ldots,n_1\}\searrow$ and $\si_2:\{1,\ldots,p\} \to \{1,\ldots,n_2\}$. Then
\begin{equation}\label{recu}
\mathrm{Cr}(F(\si_1,\si_2,\pi'))=\al(\si_1)+\beta(\si_2)+\mathrm{Cr}(\pi').
\end{equation}
\end{lemma}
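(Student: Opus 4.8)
The plan is to classify the crossings of the pairing $\pi:=F(\si_1,\si_2,\pi')$ by the types of the two pairs they involve. Call a pair of $\pi$ of \emph{type (a)} if it is one of the $p$ pairs $(\si_1(i),n_1+\si_2(i))$, $1\le i\le p$, linking the first two blocks, and of \emph{type (b)} otherwise; by step 2) of the construction of $F$, the type-(b) pairs are precisely the images of the pairs of $\pi'$ under a relabelling $\psi$ of the ground sets. In block $1$ (resp. block $2$) we call the points $\si_1(1),\dots,\si_1(p)$ (resp. $n_1+\si_2(1),\dots,n_1+\si_2(p)$) \emph{occupied} and the others \emph{free}. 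Splitting $\mathrm{Cr}(\pi)=\mathrm{Cr}_{aa}+\mathrm{Cr}_{ab}+\mathrm{Cr}_{bb}$ according to whether a crossing is formed by two type-(a) pairs, one pair of each type, or two type-(b) pairs, the aim is to prove the three identities $\mathrm{Cr}_{bb}=\mathrm{Cr}(\pi')$, $\mathrm{Cr}_{aa}=\mathrm{inv}(\si_2)$ and $\mathrm{Cr}_{ab}=\al(\si_1)+\beta(\si_2)-\mathrm{inv}(\si_2)$; their sum is (\ref{recu}).

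For $\mathrm{Cr}_{bb}$, I would first check from the definitions of $f\pt{p}g=f^{(p)}_q\contra{p}g^{[p]}_q$ and of $\contra{p}$ that the relabelling $\psi$ is strictly increasing: the first block of $\pi'$ is sent, in increasing order, onto the free points of block $1$ followed by the free points of block $2$ of $\pi$, and the remaining blocks of $\pi'$ onto blocks $3,\dots,r$ of $\pi$ by the constant shift $+2p$. Since the number of crossings of a pairing depends only on the linear order of its points, an order-preserving bijection sending the pairs of $\pi'$ to the type-(b) pairs of $\pi$ transports $\mathrm{Cr}(\pi')$ onto $\mathrm{Cr}_{bb}$. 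For $\mathrm{Cr}_{aa}$, take $i<j$; as $\si_1$ is decreasing and every point of block $2$ exceeds every point of block $1$, the four endpoints of pairs $i$ and $j$ occur in the order $\si_1(j)<\si_1(i)<n_1+\min(\si_2(i),\si_2(j))<n_1+\max(\si_2(i),\si_2(j))$, and a direct inspection shows that these two pairs cross if and only if $\si_2(j)<\si_2(i)$, i.e. if and only if $(i,j)$ is an inversion of $\si_2$; hence $\mathrm{Cr}_{aa}=\mathrm{inv}(\si_2)$.

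The substantial term is $\mathrm{Cr}_{ab}$. The key structural input is that $\pi'$ \emph{respects} the partition $(n_1+n_2-2p)\otimes n_3\otimes\dots\otimes n_r$, so no pair of $\pi'$ lies inside its first block; transported by $\psi$, this means that every type-(b) pair of $\pi$ has at most one endpoint among the free points of blocks $1$–$2$, and conversely each such free point is the unique blocks-$1$-$2$ endpoint of some type-(b) pair. A type-(a) pair $\{\si_1(i),n_1+\si_2(i)\}$ spans the interval $I_i:=(\si_1(i),n_1+\si_2(i))\subseteq\{1,\dots,n_1+n_2\}$. A type-(b) pair with an endpoint in blocks $3,\dots,r$ has that endpoint above $n_1+n_2$, hence outside $I_i$, so it crosses pair $i$ precisely when its unique free blocks-$1$-$2$ endpoint lies in $I_i$; a type-(b) pair with both endpoints in blocks $3,\dots,r$ never crosses pair $i$, and none is internal to blocks $1$–$2$. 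Therefore $\mathrm{Cr}_{ab}=\sum_{i=1}^p(\mu_i+\gamma_i)$, where $\mu_i$ is the number of free points of block $1$ above $\si_1(i)$ and $\gamma_i$ the number of free points of block $2$ below $n_1+\si_2(i)$. Finally I would identify the two weights with the same sums: ordering the values of $\si_1$ increasingly and telescoping shows $\al(\si_1)=\sum_i(n_1+1-\si_1(i))-\tfrac{p(p+1)}{2}$ equals the number of pairs (occupied point $<$ free point) inside block $1$, which is $\sum_i\mu_i$; likewise $\beta(\si_2)-\mathrm{inv}(\si_2)=\sum_i\si_2(i)-\tfrac{p(p+1)}{2}$ equals the number of pairs (free point $<$ occupied point) inside block $2$, which is $\sum_i\gamma_i$. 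Substituting these three identities into $\mathrm{Cr}(\pi)=\mathrm{Cr}_{aa}+\mathrm{Cr}_{ab}+\mathrm{Cr}_{bb}$ yields (\ref{recu}).

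I expect the main obstacle to be the bookkeeping in $\mathrm{Cr}_{ab}$: one must be sure that no type-(b) pair is counted with the wrong multiplicity — which is exactly what the respecting property of $\pi'$ prevents — and one must verify the two elementary but slightly delicate identities rewriting $\al(\si_1)$ and $\beta(\si_2)-\mathrm{inv}(\si_2)$ as counts of (occupied, free)-position pairs. Both identities come out cleanly after sorting the values of $\si_1$ (resp. $\si_2$) as $v_1<\dots<v_p$ and using $\sum_k(v_k-k)=\sum_k\#\{\text{free points below }v_k\}$.
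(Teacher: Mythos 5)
Your proof is correct, but it organizes the count differently from the paper. The paper computes $D=\mathrm{Cr}(\pi)-\mathrm{Cr}(\pi')$ by a single sweep over the $p$ connecting pairs: at step $l$ it counts all not-yet-counted crossings involving the pair $(\si_1(l),n_1+\si_2(l))$ as ``number of interior points of the spanned interval, minus corrections for the previously treated pairs,'' which yields $(n_1+\si_2(l))-\si_1(l)-l-\sum_{j<l}1_{\{\si_2(j)<\si_2(l)\}}$ per step and then sums. You instead split $\mathrm{Cr}(\pi)$ globally into the three classes $\mathrm{Cr}_{aa}+\mathrm{Cr}_{ab}+\mathrm{Cr}_{bb}$ and prove three separate closed-form identities, $\mathrm{Cr}_{bb}=\mathrm{Cr}(\pi')$, $\mathrm{Cr}_{aa}=\mathrm{inv}(\si_2)$, and $\mathrm{Cr}_{ab}=\al(\si_1)+\beta(\si_2)-\mathrm{inv}(\si_2)$; I checked the counts $\sum_i\mu_i=\al(\si_1)$ and $\sum_i\gamma_i=\sum_i\si_2(i)-\tfrac{p(p+1)}{2}$ (using injectivity of $\si_2$ for the latter) and they are right. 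The underlying geometric facts are the same in both arguments --- in particular both rely on the respecting property to guarantee that every point interior to a spanned interval is matched to a point outside it --- but your decomposition has the merit of making explicit two things the paper leaves implicit: that the relabelling of $\pi'$ into the type-(b) pairs is order-preserving (hence $\mathrm{Cr}_{bb}=\mathrm{Cr}(\pi')$), and that the inversions of $\si_2$ are exactly the crossings among the connecting pairs themselves. The price is a slightly longer case analysis for $\mathrm{Cr}_{ab}$, whereas the paper's sweep absorbs the $aa$ and $ab$ contributions into one count per step.
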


\begin{proof}
Set $\pi:=F(\si_1,\si_2,\pi')$. The difference $D:=\text{Cr}(\pi)-\text{Cr}(\pi')$ is given by the number of crossings in $\pi$ that involve at least one of the pairs $\{(\si_1(i),n_1+\si_2(i)), \ 1\leq i\leq p\}$. In order to compute this quantity, consider the following iterative procedure:

\noindent
- \emph{Step 1}: Compute the number of crossings that involve the pair $(\si_1(1),n_1+\si_2(1))$. Since $\si_1$ is decreasing and $\pi\in C_2(n_1 \otimes \ldots \otimes n_r)$, this is just the number of points between $\si_1(1)$ and $n_1+\si_2(1)$, i.e., $(n_1+\si_2(1))-\si_1(1)-1$.

\smallskip

\noindent
- \emph{Step 2}: Compute the number of crossings that involve the pair $(\si_1(2),n_1+\si_2(2))$, leaving aside the possible crossings between $(\si_1(2),n_1+\si_2(2))$ and $(\si_1(1),n_1+\si_2(1))$ (they have already been taken into account in Step 1). Since $\si_1(1)>\si_1(2)$, this leads to $(n_1+\si_2(2))-\si_1(2)-2-1_{\{\si_2(1)<\si_2(2)\}}$ crossings.

\smallskip

\vdots

\smallskip

\noindent
- \emph{Step l}: Compute the number of crossings that involve the pair $(\si_1(l),n_1+\si_2(l))$, leaving aside the possible crossings between $(\si_1(l),n_1+\si_2(l))$ and $(\si_1(1),n_1+\si_2(1)),\ldots,(\si_1(l-1),n_1+\si_2(l-1))$ (they have already been taken into account in the previous steps). This yields  $(n_1+\si_2(l))-\si_1(l)-l-\sum_{j=1}^{l-1} 1_{\{\si_2(j)<\si_2(l)\}}$ crossings.

\smallskip

\vdots

\smallskip

\noindent
By repeating this procedure up to Step $p$, one can compute $D$ as follows:
\bean
D&=& \sum_{l=1}^p \bigg[ (n_1+\si_2(l))-\si_1(l)-l-\sum_{j=1}^{l-1} 1_{\{\si_2(j)<\si_2(l)\}} \bigg]\\
&=&\sum_{l=1}^p \big[(n_1+\si_2(l))-\si_1(l)\big] -\bigg\{ \frac{p(p+1)}{2} +\sum_{l=1}^p \sum_{j=1}^{l-1} 1_{\{\si_2(j)<\si_2(l)\}} \bigg\}\\
&=&\sum_{l=1}^p \big[(n_1+\si_2(l))-\si_1(l)\big] -\{p^2-\mathrm{inv}(\si_2)\} \\
&=& \bigg[ p(n_1+1)-\sum_{l=1}^p \si_1(l)-\frac{p(p+1)}{2}\bigg]+\bigg[ \sum_{l=1}^p \si_2(l)-\frac{p(p+1)}{2}+\mathrm{inv}(\si_2)\bigg]\\
&=& \al(\si_1)+\beta(\si_2).
\eean

\end{proof}

We are now in a position to state the main result of this section, which is a suitable generalization of (\ref{form-q-bm}). (We recover (\ref{form-q-bm}) by choosing $n_1=\ldots=n_r=1$ and
$f_i={\bf 1}_{[0,t_i]}$, $i=1,\ldots,r$.)
\begin{theorem}\label{thm:form-mom}
Let $n_1,\ldots,n_r$ be positive integers. For all functions $f_1\in L^2(\R_+^{n_1}),\ldots,f_r \in L^2(\R_+^{n_r})$, it holds that
\begin{equation}\label{form-mom}
\vp_q\big(I^{X^{(q)}}_{n_1}(f_1)\ldots I^{X^{(q)}}_{n_r}(f_r)\big)=\sum_{\pi \in C_2(n_1\otimes\ldots \otimes n_r)} q^{\mathrm{Cr}(\pi)} \int_{\pi} f_1 \otimes \ldots \otimes f_r.
\end{equation}
\end{theorem}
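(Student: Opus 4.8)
The plan is to prove (\ref{form-mom}) by induction on $r$, the number of factors. The base case $r=2$ is exactly the isometry relation (\ref{isometrie}) combined with (\ref{prod-scal-2}): indeed, $\vp_q(I_{n_1}^{X^{(q)}}(f_1)I_{n_2}^{X^{(q)}}(f_2))$ vanishes unless $n_1=n_2$, in which case it equals $\langle f_1, f_2^\ast\rangle_q = \sum_{\si\in\mathfrak S_{n_1}} q^{\mathrm{inv}(\si)}\int_{P_2(\si)} f_1\otimes f_2$, and one checks that $\si\mapsto P_2(\si)$ is a bijection from $\mathfrak S_{n_1}$ onto $C_2(n_1\otimes n_2)$ with $\mathrm{Cr}(P_2(\si))=\mathrm{inv}(\si)$ (this last identity being the standard fact that the crossings of the pairing $P_2(\si)$ count precisely the inversions of $\si$). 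When no pairing respects $n_1\otimes n_2$ (i.e.\ $n_1\neq n_2$) the right-hand side of (\ref{form-mom}) is an empty sum, matching the vanishing of the left-hand side.

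For the inductive step, assume $r\geq 3$ and that (\ref{form-mom}) holds for any collection of $r-1$ multiple integrals. First apply the product formula (\ref{mul}) to the two leftmost factors:
\[
I_{n_1}^{X^{(q)}}(f_1)I_{n_2}^{X^{(q)}}(f_2)=\sum_{p=0}^{\min(n_1,n_2)} I_{n_1+n_2-2p}^{X^{(q)}}\big(f_1\pt{p} f_2\big),
\]
so that, by linearity of $\vp_q$,
\[
\vp_q\big(I^{X^{(q)}}_{n_1}(f_1)\ldots I^{X^{(q)}}_{n_r}(f_r)\big)=\sum_{p=0}^{\min(n_1,n_2)}\vp_q\big(I_{n_1+n_2-2p}^{X^{(q)}}(f_1\pt{p}f_2)\,I_{n_3}^{X^{(q)}}(f_3)\cdots I_{n_r}^{X^{(q)}}(f_r)\big).
\]
Now invoke the induction hypothesis on each of the $r-1$ factors inside: the $p$-th summand equals
$\sum_{\pi'\in C_2((n_1+n_2-2p)\otimes n_3\otimes\cdots\otimes n_r)} q^{\mathrm{Cr}(\pi')}\int_{\pi'}(f_1\pt{p}f_2)\otimes f_3\otimes\cdots\otimes f_r$. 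Apply Lemma (\ref{decomp}) to rewrite each pairing integral $\int_{\pi'}(f_1\pt{p}f_2)\otimes f_3\otimes\cdots\otimes f_r$ as a sum over pairs $(\si_1,\si_2)$ of $q^{\al(\si_1)+\beta(\si_2)}\int_{F(\si_1,\si_2,\pi')} f_1\otimes\cdots\otimes f_r$, and then use Lemma (\ref{recu}) to combine the exponents: $\mathrm{Cr}(\pi')+\al(\si_1)+\beta(\si_2)=\mathrm{Cr}(F(\si_1,\si_2,\pi'))$. Thus the $p$-th summand becomes $\sum_{\si_1,\si_2,\pi'} q^{\mathrm{Cr}(F(\si_1,\si_2,\pi'))}\int_{F(\si_1,\si_2,\pi')}f_1\otimes\cdots\otimes f_r$, where the sum is over $\si_1:\{1,\dots,p\}\to\{1,\dots,n_1\}\searrow$, $\si_2:\{1,\dots,p\}\to\{1,\dots,n_2\}$, and $\pi'\in C_2((n_1+n_2-2p)\otimes n_3\otimes\cdots\otimes n_r)$.

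It remains to observe that, as $p$ ranges over $\{1,\dots,\min(n_1,n_2)\}$ (the $p=0$ term corresponding to $f_1\otimes f_2$, i.e.\ to pairings in which the first two blocks are not connected, is handled the same way with $F$ interpreted as leaving the first two blocks untouched), the map $(\si_1,\si_2,\pi')\mapsto F(\si_1,\si_2,\pi')$ is a bijection onto the set of all $\pi\in C_2(n_1\otimes\cdots\otimes n_r)$ whose first two blocks are joined by exactly $p$ pairs — this is precisely the one-to-one statement recorded just before Lemma (\ref{decomp}). Summing over $p$ therefore reconstitutes the full sum over $\pi\in C_2(n_1\otimes\cdots\otimes n_r)$, and we obtain exactly the right-hand side of (\ref{form-mom}). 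The main obstacle, and the place requiring the most care, is bookkeeping the $p=0$ case and, more importantly, making sure that the induction hypothesis is legitimately applicable: the object $f_1\pt{p}f_2$ lies in $L^2(\R_+^{n_1+n_2-2p})$ (so the hypothesis applies), but one must also handle the degenerate situation $n_1+n_2-2p=0$, where $I_0^{X^{(q)}}(f_1\pt{p}f_2)$ is the scalar $\langle f_1,f_2^\ast\rangle_q$ times the identity and the induction is on $r-2$ rather than $r-1$ factors; this is consistent with the combinatorial side since then the first two blocks are entirely paired to each other. With these edge cases dispatched, the matching of exponents via Lemma (\ref{recu}) and of integrals via Lemma (\ref{decomp}) closes the induction.
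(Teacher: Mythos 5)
Your proposal is correct and follows essentially the same route as the paper: induction on $r$ with base case given by the isometry (\ref{isometrie}) together with (\ref{prod-scal-2}), and inductive step combining the product formula (\ref{mul}) with Lemmas (\ref{decomp}) and (\ref{recu}) and the bijectivity of $(\si_1,\si_2,\pi')\mapsto F(\si_1,\si_2,\pi')$. Your extra attention to the $p=0$ and $n_1+n_2-2p=0$ edge cases is a welcome (if minor) refinement of the paper's argument.
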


\begin{proof}
The proof is
by induction on $r\geq 2$. For $r=2$, observe first that $C_2(n_1 \otimes n_2)\neq \emptyset \Leftrightarrow n_1=n_2$. Then, with the notation of Section \ref{subsec:pairings}, every $\pi \in C_2(n \otimes n)$ can be written as $\pi=P_2(\si)$ for a unique $\si\in \mathfrak{S}_n$, and one has $\text{Cr}(\pi)=\text{inv}(\si)$ since, for every $i,j\in \{1,\ldots,n\}$,
$$n+1-j < n+1-i< n+\si(j)<n+\si(i) \quad \Longleftrightarrow \quad  i<j \ \text{and} \ \si(j)<\si(i).$$
Therefore, according to (\ref{prod-scal-2}) and (\ref{isometrie}),
$$\sum_{\pi \in C_2(n\otimes n)} q^{\mathrm{Cr}(\pi)} \int_\pi f_1 \otimes f_2 = \sum_{\si \in \mathfrak{S}_n} q^{\mathrm{inv}(\si)} \int_{P_2(\si)} f_1 \otimes f_2 = \langle f_1,f_2^{\ast} \rangle_q = \vp_q \big( I_n^{X^{(q)}}(f_1)I_n^{X^{(q)}}(f_2) \big),
$$
which corresponds to (\ref{form-mom}) in this case.

\smallskip

\noindent
Assume now that (\ref{form-mom}) holds true for every $(n_1,\ldots,n_s)$ with $s\leq r-1$ ($r\geq 3$), and fix $n_1,\ldots,n_r \geq 1$. According to the product formula (\ref{mul}), one can write
$$\vp_q\big(I^{X^{(q)}}_{n_1}(f_1)\ldots I^{X^{(q)}}_{n_r}(f_r)\big)=\sum_{p=0}^{\min(n_1,n_2)} \vp_q \big( I^{X^{(q)}}_{n_1+n_2-2p}(f_1 \pt{p} f_2) I^{X^{(q)}}_{n_3}(f_3) \ldots I^{X^{(q)}}_{n_r}(f_r) \big),$$
and hence, by our induction assumption,
$$\vp_q\big(I^{X^{(q)}}_{n_1}(f_1)\ldots I^{X^{(q)}}_{n_r}(f_r)\big)=\sum_{p=0}^{\min(n_1,n_2)} \sum_{\pi \in C_2((n_1+n_2-2p)\otimes n_3\otimes \ldots \otimes n_r)} q^{\mathrm{Cr}(\pi)} \int_\pi (f_1 \pt{p}f_2) \otimes f_3 \otimes \ldots \otimes f_r.$$
Now, given $\pi \in C_2(n_1 \otimes \ldots \otimes n_r)$, denote by $p_\pi\in \{0,\ldots,\min(n_1,n_2)\}$ the number of pairs in $\pi$ that link the first two blocks $\{1,\ldots,n_1\}$ and $\{n_1+1,\ldots,n_1+n_2\}$.
By using successively Formulae (\ref{decomp}) and (\ref{recu}), we deduce
\bean
\lefteqn{\vp_q\big(I^{X^{(q)}}_{n_1}(f_1)\ldots I^{X^{(q)}}_{n_r}(f_r)\big)}\\
&=& \sum_{\pi \in C_2((n_1+n_2) \otimes n_3 \otimes \ldots \otimes n_r)} q^{\mathrm{Cr}(\pi)} \int_\pi (f_1 \otimes f_2) \otimes f_3 \otimes \ldots \otimes f_r\\
& &+\sum_{p=1}^{\min(n_1,n_2)} \sum_{\substack{\pi \in C_2((n_1+n_2-2p)\otimes n_3\otimes \ldots \otimes n_r)\\ \si_1:\{1,\ldots,p\} \to \{1,\ldots,n_1\} \searrow\\ \si_2:\{1,\ldots,p\} \to \{1,\ldots,n_2\}}} q^{\al(\si_1)+\beta(\si_2)+\mathrm{Cr}(\pi)} \int_{F(\si_1,\si_2,\pi)} f_1\otimes f_2\otimes \ldots \otimes f_r\\
&=& \sum_{\substack{\pi \in C_2(n_1\otimes n_2 \otimes n_3 \otimes \ldots \otimes n_r)\\ p_\pi=0}} q^{\mathrm{Cr}(\pi)} \int_\pi f_1 \otimes f_2 \otimes f_3 \otimes \ldots \otimes f_r\\
& &+\sum_{p=1}^{\min(n_1,n_2)} \sum_{\substack{\pi \in C_2((n_1+n_2-2p)\otimes n_3\otimes \ldots \otimes n_r)\\ \si_1:\{1,\ldots,p\} \to \{1,\ldots,n_1\} \searrow\\ \si_2:\{1,\ldots,p\} \to \{1,\ldots,n_2\}}} q^{\mathrm{Cr}(F(\si_1,\si_2,\pi))} \int_{F(\si_1,\si_2,\pi)} f_1\otimes f_2\otimes \ldots \otimes f_r\\
&=& \sum_{\substack{\pi \in C_2(n_1\otimes n_2 \otimes n_3 \otimes \ldots \otimes n_r)\\ p_\pi=0}} q^{\mathrm{Cr}(\pi)} \int_\pi f_1 \otimes f_2 \otimes f_3 \otimes \ldots \otimes f_r\\
& &+\sum_{p=1}^{\min(n_1,n_2)} \sum_{\substack{\pi \in C_2(n_1\otimes n_2 \otimes n_3 \otimes \ldots \otimes n_r)\\ p_\pi=p}} q^{\mathrm{Cr}(\pi)} \int_\pi f_1 \otimes f_2 \otimes f_3 \otimes \ldots \otimes f_r\\
&=& \sum_{\pi \in C_2(n_1\otimes n_2 \otimes n_3 \otimes \ldots \otimes n_r)} q^{\mathrm{Cr}(\pi)} \int_\pi f_1 \otimes f_2 \otimes f_3 \otimes \ldots \otimes f_r,
\eean
which completes the induction procedure.
\end{proof}

\section{Fourth moment theorem and applications}\label{section:proof}

\subsection{Colored pairings}

Given $n_1,\ldots,n_R\geq 1$, we denote by $\mathrm{Col}(\{n_1,\ldots,n_R\})$ the set of `colors' in $\{n_1,\ldots,n_R\}$, i.e., $\mathrm{Col}(\{n_1,\ldots,n_R\})=\{N_1,\ldots,N_K\}$ where $N_1<\ldots<N_K$ are such that $\{N_1,\ldots,N_k\}=\{n_1,\ldots,n_R\}$. We also introduce the set $C_2^{col}(n_{i_1},\ldots,n_{i_r})$ of pairings in $\mathcal{P}_2(\{1,\ldots,r\})$ that respects the coloring of $(n_1,\ldots,n_R)$, i.e.,
$$C_2^{col}(n_{1},\ldots,n_{R}):=\{\pi \in \mathcal{P}_2(\{1,\ldots,r\}): \ \text{for all} \ \{l,m\}\in \pi, \ n_{l}=n_{m}\}.$$
Given $\pi\in C_2^{col}(n_{1} ,  \ldots , n_{R})$, we define $\pi_i$ ($1\leq i\leq K$) as the set of pairs in $\pi$ with the same color $N_i$, i.e.,
$$\pi_i:=\{\{l,m\}\in \pi: \ n_l=n_m=N_i\}.$$

\smallskip

\noindent
Finally, for all subsets $\pi',\pi''$ of a given pairing $\pi$, the notation $\mathrm{Cr}(\pi',\pi'')$ will refer to the number of crossings (in $\pi$) between pairs of $\pi'$ and pairs of $\pi''$.

\

With this notation in hand, we can state the main result of this section.

\begin{theorem}\label{theo:main}
Fix $q\in [0,1)$, $d\geq 1$, $n_1,\ldots,n_d\geq 1$ and consider $d$ sequences $\{f_k^1\}_{k\geq 1}\subset L^2(\R_+^{n_1}),\ldots, \{f_k^d\}_{k\geq 1}\subset  L^2(\R_+^{n_d})$ of symmetric functions. Assume that
\[
\lim_{k\to\infty}
\langle f^i_k,f^j_k\rangle_q = c(i,j)\quad\mbox{for all $i,j\in\{1,\ldots,d\}$}.
\]
Then, the following four assertions are equivalent as $k\to\infty$:

\

\noindent
(i) For every $i\in \{1,\ldots,d\}$, $\vp_q\big( I_{n_i}^{X^{(q)}}(f_k^i)^4 \big) \to (2+q^{n_i^2}) c(i,i)^2$.

\

\noindent
(ii) For every $i\in \{1,\ldots,d\}$ and every $p\in \{1,\ldots,n_i-1\}$, $\|f^i_k \contra{p} f^i_k\|_{L^2(\R_+^{2{n_i}-2p})} \to 0$.

\

\noindent
(iii) For every $i\in \{1,\ldots,d\}$, $I_{n_i}^{X^{(q)}}(f_k^i) \overset{\text{law}}{\to} \mathcal{G}_{q^{n_i^2}}(0,c(i,i))$.

\

\noindent
(iv) For every $r\geq 1$ and every $i_1,\ldots,i_r \in \{1,\ldots,d\}$, one has
\begin{equation}\label{formu-gene}
\vp_q\big( I_{n_{i_1}}^{X^{(q)}}(f_k^{i_1})\ldots I_{n_{i_r}}^{X^{(q)}}(f_k^{i_r}) \big) \to \sum_{\pi \in C_2^{col}(n_{i_1},\ldots,n_{i_r})} \prod_{1\leq i\leq j\leq K} q^{\mathrm{Cr}(\pi_i,\pi_j)N_iN_j} \prod_{\{l,m\}\in \pi} c(i_l,i_m),
\end{equation}
where we have set $\{N_1,\ldots,N_K\}:=\mathrm{Col}(\{n_{i_1},\ldots,n_{i_r}\})$.
\end{theorem}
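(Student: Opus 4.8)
\textbf{Proof strategy for Theorem \ref{theo:main}.}

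The plan is to establish the chain of implications $(ii)\Rightarrow(iv)\Rightarrow(iii)\Rightarrow(i)\Rightarrow(ii)$, so that the crux of the argument (the appearance of $q^{n_i^2}$ in the limit) is handled once and for all in the passage from the contraction estimate $(ii)$ to the convergence of joint moments $(iv)$. I would begin by observing, using the moment formula (\ref{form-mom}) from Theorem \ref{thm:form-mom}, that each joint moment $\vp_q\big( I_{n_{i_1}}^{X^{(q)}}(f_k^{i_1})\ldots I_{n_{i_r}}^{X^{(q)}}(f_k^{i_r}) \big)$ is a finite sum over $\pi\in C_2(n_{i_1}\otimes\ldots\otimes n_{i_r})$ of terms $q^{\mathrm{Cr}(\pi)}\int_\pi f_k^{i_1}\otimes\ldots\otimes f_k^{i_r}$. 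The first key step is a graph-theoretic/combinatorial lemma: as $k\to\infty$, the only pairings $\pi$ whose pairing-integral does \emph{not} vanish under hypothesis $(ii)$ are those that are ``color-respecting and non-self-crossing within blocks'', i.e. those lying in the image of the natural map $C_2^{col}(n_{i_1},\ldots,n_{i_r})\to C_2(n_{i_1}\otimes\ldots\otimes n_{i_r})$ in which each pair $\{l,m\}$ joins \emph{entire} blocks $B_l$ and $B_m$ (forcing $n_{i_l}=n_{i_m}$) and in which no two of the $n_{i_l}$ edges emanating from a single pairing of blocks cross each other. Indeed, if $\pi$ either links two points of the same block (impossible in $C_2$), or links blocks only partially, or has an internal crossing inside a block-to-block bundle, then a Cauchy–Schwarz argument bounds $|\int_\pi f_k^{i_1}\otimes\ldots\otimes f_k^{i_r}|$ by a product of $L^2$-norms of $f_k^i$'s and of genuine contraction norms $\|f_k^i\contra{p}f_k^i\|$ with $1\le p\le n_i-1$, which go to $0$. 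For the surviving pairings, each block-bundle contributes a factor $\langle f_k^{i_l},f_k^{i_m}\rangle_{L^2}$ up to a permutation, but because the $f_k^i$ are \emph{symmetric} this is simply $\langle f_k^{i_l},f_k^{i_m}\rangle_{L^2}\to$ (a multiple of) $c(i_l,i_m)$; and the crossing exponent $\mathrm{Cr}(\pi)$ factors as $\sum_{i\le j}\mathrm{Cr}(\pi_i,\pi_j)N_iN_j$ plus the internal crossings, the latter summing (over all internal orderings of each bundle, via the identity $\sum_{\si\in\mathfrak{S}_{N}}q^{\mathrm{inv}(\si)}$) to give exactly the normalization. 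Carrying the bookkeeping through yields (\ref{formu-gene}), with the $N_iN_j$ in the exponent arising precisely because a crossing of two block-\emph{edges} in the colored picture corresponds to $N_iN_j$ crossings of individual strands in $C_2$.

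Next, $(iv)\Rightarrow(iii)$ is immediate: taking $r$ arbitrary and all $i_1=\ldots=i_r=i$ in (\ref{formu-gene}), the right-hand side becomes $\sum_{\pi\in\mathcal{P}_2(\{1,\ldots,r\})}q^{n_i^2\,\mathrm{Cr}(\pi)}c(i,i)^{r/2}$, which by (\ref{mom-qbm-intro}) is exactly the $r$-th moment of $\mathcal{G}_{q^{n_i^2}}(0,c(i,i))$; since a $q$-Gaussian law is compactly supported hence determined by its moments, convergence of all moments gives convergence in law. The implication $(iii)\Rightarrow(i)$ is also essentially free: from the known second moment $\vp_q(I_{n_i}^{X^{(q)}}(f_k^i)^2)=\|f_k^i\|_q^2=\langle f_k^i,f_k^i\rangle_q\to c(i,i)$ together with $(iii)$ and a uniform-integrability/boundedness argument for fourth moments (all these variables are bounded operators whose operator norms are controlled, e.g.\ via hypercontractivity of the $q$-Ornstein–Uhlenbeck semigroup, or simply by noting that along the sequence the fourth moment is a fixed polynomial expression in inner products that already converges by the $r=4$, single-color case of the moment formula), we get $\vp_q(I_{n_i}^{X^{(q)}}(f_k^i)^4)\to\int x^4\,d\mathcal{G}_{q^{n_i^2}}(0,c(i,i))=(2+q^{n_i^2})c(i,i)^2$, using (\ref{mom-qbm-intro}) with $k=2$ which gives $\sum_{\pi\in\mathcal{P}_2(\{1,2,3,4\})}q^{\mathrm{Cr}(\pi)}=2+q$.

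Finally, for $(i)\Rightarrow(ii)$ I would compute $\vp_q(I_{n_i}^{X^{(q)}}(f_k^i)^4)$ explicitly via the product formula (\ref{mul}) and the moment formula: writing $I_{n_i}^{X^{(q)}}(f_k^i)^2=\sum_{p=0}^{n_i}I_{2n_i-2p}^{X^{(q)}}(f_k^i\pt{p}f_k^i)$ and then applying the isometry (\ref{isometrie}) together with mirror-symmetry, one obtains that $\vp_q(I_{n_i}^{X^{(q)}}(f_k^i)^4)$ equals $(2+q^{n_i^2})\langle f_k^i,f_k^i\rangle_q^2$ plus a sum, over $p\in\{1,\ldots,n_i-1\}$, of \emph{nonnegative} quantities each comparable to $\|f_k^i\contra{p}f_k^i\|_{L^2}^2$ (the positivity uses $q\in[0,1)$, that the $q$-inner product $\langle\cdot,\cdot\rangle_q$ is positive definite, and symmetry of $f_k^i$ to identify the various mixed contraction norms). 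Hence $(i)$ forces each of these error terms to $0$, which is exactly $(ii)$. The main obstacle, and the combinatorial heart of the paper, is the first step — $(ii)\Rightarrow(iv)$ — where one must (a) identify correctly which pairings survive, (b) verify that the internal crossings within each color-bundle, summed against $q^{\cdot}$, reproduce the factor $\sum_{\si\in\mathfrak{S}_{N_i}}q^{\mathrm{inv}(\si)}$ that is absorbed into the normalization $\langle f_k^i,f_k^i\rangle_q$ rather than appearing in the limiting law, and (c) track the $N_iN_j$ weighting of inter-bundle crossings; the bound in (a) requires a careful Cauchy–Schwarz estimate on pairing integrals indexed by the ``partition graph'' of $\pi$, of the type used in \cite{knps} for the free case, adapted here to accommodate the extra $q^{\mathrm{Cr}(\pi)}\le 1$ factors (harmless since $q\ge 0$).
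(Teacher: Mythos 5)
Your proposal follows essentially the same route as the paper: the same cycle of implications (the paper writes it as $(iv)\Rightarrow(iii)\Rightarrow(i)\Rightarrow(ii)\Rightarrow(iv)$), the same explicit fourth-moment identity $\vp_q\big(I_n^{X^{(q)}}(f)^4\big)=(2+q^{n^2})\|f\|_q^4+(\text{nonnegative contraction remainders})$ for $(i)\Rightarrow(ii)$, and the same Cauchy--Schwarz reduction to fully-linked, equal-size block pairings followed by the decomposition $\mathrm{Cr}(\pi)=\sum_{i\leq j}\mathrm{Cr}(\widetilde{\pi}_i,\widetilde{\pi}_j)N_iN_j+\sum_{s,t}\mathrm{inv}(\si_s^{(t)})$ for $(ii)\Rightarrow(iv)$. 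The only slip is your claim that the surviving pairings are those whose block-to-block bundles are internally non-crossing: internally crossing bundles do \emph{not} vanish --- they persist and contribute precisely the $q^{\mathrm{inv}(\si)}$ weights that reassemble $\sum_{\si\in\mathfrak{S}_{N_t}}q^{\mathrm{inv}(\si)}\int_{P_2(\si)}f_k^{i_l}\otimes f_k^{i_m}=\langle f_k^{i_l},f_k^{i_m}\rangle_q\to c(i_l,i_m)$, exactly as your own subsequent bookkeeping (correctly) assumes, so this is an internal inconsistency of phrasing rather than a gap in the argument.
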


\

Remark that, in the case where the integers $n_1,\ldots,n_d$ are such that
\[
\text{Card}(\mathrm{Col}(\{n_{1},\ldots,n_{d}\}))\geq 2,
\]
it seems difficult to identify the limit in (\ref{formu-gene}) as the distribution of a multidimensional $q^\ast$-Gaussian law
for some $q^\ast \in (-1,1)$, due to the (possibly) changing weights $q^{\mathrm{Cr}(\pi_i,\pi_j)N_i,N_j}$.

\

Before proving Theorem \ref{theo:main}, let us first explain how it implies Theorem \ref{theo:intro}, Theorem \ref{transferprinciple} and
Theorem \ref{breuermajor}.

\

{\it Proof of Theorem \ref{theo:intro}}. Theorem \ref{theo:intro} is an immediate spin-off of Theorem \ref{theo:main}.
Indeed, take $n_1=\ldots=n_d=n$, $f^1_k=\ldots=f^d_k=f_k$ with $\|f_k\|_q^2 \to 1$ as $k\to\infty$. Then $\mathrm{Col}(\{n_{i_1},\ldots,n_{i_r}\})=\{n\}$, $C_2^{col}(n_{i_1},\ldots,n_{i_r})=\mathcal{P}_2(\{1,\ldots,r\})$, and the limit in (\ref{formu-gene}) becomes $\sum_{\pi \in \mathcal{P}_2(\{1,\ldots,r\})} q^{\mathrm{Cr}(\pi)n^2}$, which is precisely the $r$th moment of the $q$-Gaussian law $\mathcal{G}_{q^{n^2}}(0,1)$, see indeed (\ref{mom-qbm-intro})
and observe that $\mathcal{P}_2(\{1,\ldots,r\})=\emptyset$ when $r$ is odd.\qed

\bigskip

{\it Proof of Theorem \ref{transferprinciple}}. Here again, it is an immediate spin-off of Theorem \ref{theo:main},
still by considering the case where $n_1=\ldots=n_d=n$ and $f^1_k=\ldots=f^d_k=f_k$ with $\|f_k\|_q^2 \to 1$ as $k\to\infty$. Indeed, assume that $I_n^{X^{(q)}}(f_k)$ converges in law to $\mathcal{G}_{q^{n^2}}(0,\si_{q}^2)$
for {\it one particular} $q\in[0,1]$.
Using the implication $(iii)\Rightarrow(ii)$ in Theorem \ref{theo:main}, we get
that $\|f_k \contra{p} f_k\|_{L^2(\R_+^{2n-2p})} \to 0$ as $k\to\infty$ for
every $p\in \{1,\ldots,n-1\}$. But, since this latter assertion does not depend on $q$, we deduce, by using
the converse implication $(ii)\Rightarrow(iii)$ for all the other possible values of $q$, that
$I_n^{X^{(q)}}(f_k)$ converges in law to $\mathcal{G}_{q^{n^2}}(0,\si_{q}^2)$
for {\it all} $q\in[0,1]$.\qed

\bigskip

{\it Proof of Theorem \ref{breuermajor}}.
The result for $q=1$ being already known (see \cite{breuermajor}), we fix $q\in[0,1)$ in all the proof.
Since our aim is to show a convergence in law,  in (\ref{thm19expr}) it is not a loss of generality to
 replace
 $G_l$ with $I_1^{X^{(q)}}(e_l)$, where the sequence $\{e_l\}_{l\in\N}\subset L^2(\R_+)$
is chosen so that
\begin{equation}\label{el}
\int_0^\infty e_l(x)e_m(x)dx = \rho(l-m),\quad l,m\in\N.
\end{equation}
Indeed, using the fact that the covariance function characterizes the distribution of any centered $q$-Gaussian family, we  immediately check that
$\{G_l\}_{l\in\N} \,\overset{\rm law}{=}\,\{ I_1^{X^{(q)}}(e_l)\}_{l\in\N}$. Moreover, such a sequence $\{e_l\}$ is easily shown to exist by considering the linear span
$\mathcal{H}$ of $\{G_l\}$. It is indeed a real separable Hilbert space and, consequently,
there exists an isometry $\Phi:\mathcal{H}\to L^2(\R_+)$. By setting $e_l = \Phi(G_l)$, we get that
 (\ref{el}) holds true.

 Now, using (\ref{lien-tch}), we can write
\[
\frac{1}{\sqrt{k}}\sum_{l=0}^{[kt]}H_n^{(q)}(G_l)
\overset{\rm law}{=}\frac{1}{\sqrt{k}}\sum_{l=0}^{[kt]}H_n^{(q)}\big(I_1^{X^{(q)}}(e_l)\big)
=\frac{1}{\sqrt{k}}\sum_{l=0}^{[kt]}I^{X^{(q)}}_n\big(e_l^{\otimes n}\big)
=I^{X^{(q)}}_n\left(\frac{1}{\sqrt{k}}\sum_{l=0}^{[kt]}e_l^{\otimes n}\right).
\]
Fix $d\geq 1$ and $t_1,\ldots,t_d>0$.
For each $i\in\{1,\ldots,d\}$, observe that the kernel
\[
f_k(t_i):=\frac{1}{\sqrt{k}}\sum_{l=0}^{[kt_i]}e_l^{\otimes n}
\]
is a symmetric
function of $L^2(\R_+^n)$. Moreover, it may be shown (see \cite[chapter 7]{NouPecBook}) that
\[
\lim_{k\to\infty}\langle f_k(t_i),f_k(t_j)\rangle_q =
\lim_{k\to\infty}\sum_{\sigma\in\mathfrak{S}_n}q^{{\rm inv}(\si)}\langle f_k(t_i),f_k(t_j)\rangle_{L^2(\R_+^n)} =
\sum_{\sigma\in\mathfrak{S}_n}q^{{\rm inv}(\si)}\sum_{l\in\Z}\rho(l)^n\,t_i\wedge t_j
\]
for all $i,j\in\{1,\ldots,d\}$
(in particular, $\sum_{l\in\Z}\rho(l)^n\geq 0$) and that
\[
\lim_{k\to\infty}\| f_k(t_i)\contra{p}f_k(t_i)\|_{L^2(\R_+^{2n-2p})} =
0
\]
for all $i\in\{1,\ldots,d\}$ and all $p\in\{1,\ldots,n-1\}$.
Take $n_1=\ldots=n_d=n$ in Theorem \ref{theo:main} and set $f^1_k=f_k(t_1)$, $\ldots$,
$f^d_k=f_k(t_d)$.
Since $(ii)$ holds, we deduce from (\ref{formu-gene}) that, for every $r\geq 1$ and every $s_1,\ldots,s_r\in\{t_1,\ldots,t_d\}$,
the quantity
\[
\vp_q\big( I_{n}^{X^{(q)}}(f_k(s_1))\ldots I_{n}^{X^{(q)}}(f_k(s_r)) \big)
=
\vp\left(\frac{1}{\sqrt{k}}\sum_{l=0}^{[ks_1]}H_n^{(q)}(G_l)\ldots
\frac{1}{\sqrt{k}}\sum_{l=0}^{[ks_r]}H_n^{(q)}(G_l)\right)
\]
converges,
as $k\to\infty$, to
\bean
&&
\left(\sum_{\sigma\in\mathfrak{S}_n}q^{{\rm inv}(\si)}\sum_{l\in\Z}\rho(l)^n\right)^{r/2}\sum_{\pi\in \mathcal{P}_2(\{1,\ldots,r\})} q^{\mathrm{Cr}(\pi)n^2} \prod_{\{l,m\}\in \pi} s_{i_l}\wedge s_{i_m}\\
&&= \left(\sum_{\sigma\in\mathfrak{S}_n}q^{{\rm inv}(\si)}\sum_{l\in\Z}\rho(l)^n\right)^{r/2}\vp_{q_n}\big( X^{(q_n)}(s_1) \ldots X^{(q_n)}(s_r) \big),
\eean
where we set $q_n:=q^{n^2}$ for simplicity and where $X^{(q_n)}$ is a $q_n$-Brownian motion
(for the second expression of the limit, see (\ref{form-q-bm})).
This concludes the proof of Theorem \ref{breuermajor}.\qed

\bigskip

We now turn to the proof of Theorem \ref{theo:main}.

\

{\it Proof of Theorem \ref{theo:main}}.
We shall prove the following sequence of implications: $(iv)\Rightarrow (iii)\Rightarrow (i)\Rightarrow (ii)\Rightarrow (iv)$.

\

\fbox{
\begin{minipage}{0.13\textwidth}
$(iv) \Rightarrow (iii)$
\end{minipage}
}
Assume $(iv)$, fix $i\in\{1,\ldots,d\}$, $r\geq 1$ and take $i_1=\ldots=i_r=i$ in $(iv)$.
Let $f^1,\ldots,f^d\in L^2(\R_+)$ be such that $\langle f^i,f^j\rangle_{L^2(\R_+)}=c(i,j)$ for all $i,j\in\{1,\ldots,d\}$.
In this case, convergence (\ref{formu-gene}) can be written as
\bean
\vp_{q}\big( I_{n_i}^{X^{(q)}}(f_k^i)^r \big)\to&& \sum_{\pi\in \mathcal{P}_2(\{1,\ldots,r\})} q^{\mathrm{Cr}(\pi)n_i^2} \prod_{\{l,m\}\in \pi} \langle f^i,f^i \rangle_{L^2(\R_+)}\\
&=& \sum_{\pi\in \mathcal{P}_2(\{1,\ldots,r\})} q^{\mathrm{Cr}(\pi)n_i^2} \int_\pi f^i \otimes \ldots \otimes f^i\\
&=& \vp_{q_i}\big( I_1^{X^{(q_i)}}(f^i)^r \big)\quad\mbox{with $q_i=q^{n_i^2}$},
\eean
the last equality being an easy consequence of formula (\ref{form-mom}).
Since the convergence in law is equivalent by its very definition to the convergence of all the moments,
the implication $(iv)\Rightarrow (iii)$ is shown.

\

\fbox{
\begin{minipage}{0.11\textwidth}
$(iii) \Rightarrow (i)$
\end{minipage}
}
Obvious (still because convergence in law reduces to convergence of all the moments in our framework).

\

\fbox{
\begin{minipage}{0.1\textwidth}
$(i) \Rightarrow (ii)$
\end{minipage}
}
We shall make use of the following proposition.

\begin{proposition}\label{calcul4}
Fix $q\in (-1,1)$. Then, for every symmetric function $f\in L^2(\R_+^n)$, we have
\begin{equation}\label{mom-4}
\vp_q\big( I_n^{X^{(q)}}(f)^4 \big)=(2+q^{n^2}) \|f\|_q^4+\sum_{p=1}^{n-1} \bigg\{ \| f\pt{p} f\|_{q}^2+\bigg( \sum_{\si\in \mathfrak{S}_{2n}^p}q^{\mathrm{inv}(\si)}\bigg)  \|f \contra{p}f\|^2_{L^2(\R_+^{2n-2p})}\bigg\},
\end{equation}
where
$$ \mathfrak{S}_{2n}^p=\{\si \in \mathfrak{S}_{2n}: \ \mathrm{Card}(\si(\{1,\ldots,n\})\cap \{1,\ldots,n\})=p\}.$$
\end{proposition}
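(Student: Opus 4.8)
\textbf{Proof proposal for Proposition \ref{calcul4}.}

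The plan is to compute $\vp_q\big(I_n^{X^{(q)}}(f)^4\big)$ by applying the moment formula \eqref{form-mom} with $r=4$ and $n_1=n_2=n_3=n_4=n$, that is, to sum $q^{\mathrm{Cr}(\pi)}\int_\pi f\otimes f\otimes f\otimes f$ over all $\pi\in C_2(n\otimes n\otimes n\otimes n)$, and then to reorganize the sum according to the ``connectivity pattern'' of $\pi$ between the four blocks $B_1,B_2,B_3,B_4$. Since $f$ is symmetric, the pairing integral $\int_\pi f^{\otimes 4}$ depends only on this pattern (up to relabeling within a block), which is what makes the bookkeeping tractable. Each $\pi$ assigns to every unordered pair of blocks $\{B_a,B_b\}$ a number $p_{ab}\in\{0,\ldots,n\}$ of connecting pairs, subject to $\sum_{b\neq a}p_{ab}=n$ for each $a$. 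I would split the six numbers $(p_{12},p_{13},p_{14},p_{23},p_{24},p_{34})$ into the relevant cases.

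First I would isolate the ``pure pairing'' contribution: the configurations where the four blocks are joined two-by-two with all $n$ strands, i.e.\ $(p_{12},p_{34})=(n,0,\ldots)$, $(p_{13},p_{24})=(n,\ldots)$, or $(p_{14},p_{23})=(n,\ldots)$. Using the lemma \eqref{prod-scal-2} (which identifies $\langle f,f\rangle_q$ with $\sum_\sigma q^{\mathrm{inv}(\sigma)}\int_{P_2(\sigma)}f\otimes f$ and with $\langle f,f^\ast\rangle_q=\|f\|_q^2$ when $f$ is symmetric, so $f=f^\ast$), each of the three ``planar-type'' couplings contributes exactly $\|f\|_q^4$, while the one ``crossing-type'' coupling (blocks $1$ with $3$ and $2$ with $4$, where each of the $n$ strands from the $12$-gap crosses each of the $n$ strands to the $34$-gap, contributing $q^{n^2}$ overall times a factor that again integrates to $\|f\|_q^4$) contributes $q^{n^2}\|f\|_q^4$. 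This yields the leading term $(2+q^{n^2})\|f\|_q^4$. I expect that carefully matching the crossing count $\mathrm{Cr}(\pi)$ on these configurations to $\mathrm{inv}$-type statistics — so that the $q$-weights collapse to exactly $\|f\|_q^4$ via \eqref{prod-scal-2} — is one of the two delicate points; Lemma \eqref{recu} and the structure of $F(\sigma_1,\sigma_2,\pi')$ should do the job after peeling off the blocks two at a time.

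The remaining configurations are those with some $p_{ab}\in\{1,\ldots,n-1\}$; these are the ones producing genuine contractions. Here I would run the induction-on-$r$ machinery of Theorem \ref{thm:form-mom} in reverse: write $I_n(f)^4=\big(I_n(f)I_n(f)\big)\big(I_n(f)I_n(f)\big)$, expand each inner product by \eqref{mul} as $\sum_p I_{2n-2p}(f\pt{p}f)$, and then take $\vp_q$ of the product of the two resulting sums, using \eqref{isometrie}: $\vp_q(I_m(g)I_{m'}(g'))=\delta_{m,m'}\langle g,g'^\ast\rangle_q$. Because $f$ is symmetric one checks $f\pt{p}f$ is mirror-symmetric, so $(f\pt{p}f)^\ast=f\pt{p}f$ and the cross term for index $p$ becomes $\|f\pt{p}f\|_q^2$; this accounts for the $p=1,\ldots,n-1$ terms $\|f\pt{p}f\|_q^2$ (the $p=0$ term $\|f\otimes f\|_q^2$ and the $p=n$ term $\langle f,f\rangle_q^2=\|f\|_q^4$ get absorbed into the leading $(2+q^{n^2})\|f\|_q^4$ together with the two surviving planar couplings above — this reconciliation is exactly the first delicate point). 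Finally, the term with coefficient $\big(\sum_{\sigma\in\mathfrak{S}_{2n}^p}q^{\mathrm{inv}(\sigma)}\big)\|f\contra{p}f\|_{L^2}^2$ comes from re-expressing $\|f\pt{p}f\|_q^2$: by definition $f\pt{p}f=f^{(p)}_q\contra{p}f^{[p]}_q$, and unwinding $\langle f\pt{p}f,f\pt{p}f\rangle_q$ via \eqref{prod-scal} produces, after collecting the $q$-powers coming from $\alpha,\beta$ and the permutations in $\mathfrak{S}_{2n}$, precisely the ordinary contraction $f\contra{p}f$ weighted by the partial-permutation sum over $\mathfrak{S}_{2n}^p$. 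The main obstacle is this last identity: tracking all the $q$-exponents (the $\alpha(\sigma_1)+\beta(\sigma_2)$ weights from the $q$-contraction, plus the $\mathrm{inv}$ from $\langle\cdot,\cdot\rangle_q$) and showing they reassemble into $\sum_{\sigma\in\mathfrak{S}_{2n}^p}q^{\mathrm{inv}(\sigma)}$ acting on $f\contra{p}f$, i.e.\ a purely combinatorial $q$-counting lemma; everything else is a matter of organizing the sum over $C_2(n^{\otimes 4})$ by connectivity pattern and invoking the already-established formulae \eqref{mul}, \eqref{isometrie}, \eqref{prod-scal-2}, \eqref{recu}.
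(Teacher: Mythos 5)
There is a genuine gap in your bookkeeping, and as written the argument does not close. Your product-formula step correctly gives $\vp_q\big(I_n^{X^{(q)}}(f)^4\big)=\sum_{p=0}^n\|f\pt{p}f\|_q^2$; the $p=n$ term is $\|f\|_q^4$ and the terms $p\in\{1,\ldots,n-1\}$ survive untouched as the $\|f\pt{p}f\|_q^2$ appearing in (\ref{mom-4}). But you then make two claims that are both wrong: (a) that the $p=0$ term $\|f\otimes f\|_q^2$ only feeds the leading $(2+q^{n^2})\|f\|_q^4$, and (b) that the terms $\big(\sum_{\si\in\mathfrak{S}_{2n}^p}q^{\mathrm{inv}(\si)}\big)\|f\contra{p}f\|^2_{L^2}$ arise from ``re-expressing'' $\|f\pt{p}f\|_q^2$. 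On (b): these two quantities are simply not equal --- for $n=2$, $p=1$ one has $\|f\pt{1}f\|_q^2=(1+q)^5\|f\contra{1}f\|^2_{L^2(\R_+^2)}$ while $\sum_{\si\in\mathfrak{S}_4^1}q^{\mathrm{inv}(\si)}=q(1+q)^4$ (this is exactly the content of (\ref{zer})) --- and even if they were, converting $\|f\pt{p}f\|_q^2$ into the contraction norm would delete the term $\|f\pt{p}f\|_q^2$ itself, which must appear separately in (\ref{mom-4}). On (a): the $\mathfrak{S}_{2n}^p$-weighted contraction norms are precisely the part of $\|f\otimes f\|_q^2$ that your accounting throws away. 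The missing step --- which is the heart of the paper's proof --- is to expand $\|f\otimes f\|_q^2=\sum_{\si\in\mathfrak{S}_{2n}}q^{\mathrm{inv}(\si)}\int_{P_2(\si)}(f\otimes f)\otimes(f\otimes f)^\ast$ via (\ref{prod-scal-2}) and split $\mathfrak{S}_{2n}=\bigcup_{p=0}^n\mathfrak{S}_{2n}^p$: the class $p=n$ contributes $\|f\|_q^4$, the class $p=0$ contributes $q^{n^2}\|f\|_q^4$, and each class $p\in\{1,\ldots,n-1\}$ contributes $\big(\sum_{\si\in\mathfrak{S}_{2n}^p}q^{\mathrm{inv}(\si)}\big)\|f\contra{p}f\|^2_{L^2}$, because for symmetric $f$ one has $\int_{P_2(\si)}(f\otimes f)\otimes(f\otimes f)^\ast=\|f\contra{p}f\|^2_{L^2(\R_+^{2n-2p})}$ for every $\si\in\mathfrak{S}_{2n}^p$.

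A secondary issue: you cannot superpose the direct attack on $C_2(n\otimes n\otimes n\otimes n)$ (your first part) with the product-formula expansion (your second part) without double counting, since the three complete block-couplings you isolate are exactly the $p=n$ term plus the complete-coupling portion of the $p=0$ term. Also, among the three complete couplings there are only \emph{two} non-crossing ones, $\{1,2\},\{3,4\}$ and $\{1,4\},\{2,3\}$, each contributing $\|f\|_q^4$, plus the crossing one $\{1,3\},\{2,4\}$ contributing $q^{n^2}\|f\|_q^4$; your phrase ``each of the three planar-type couplings contributes $\|f\|_q^4$'' would give $3\|f\|_q^4$ and contradicts your own conclusion. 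The cleanest fix is to drop the first part entirely and run the paper's route: product formula, isolate $p=n$ and $p\in\{1,\ldots,n-1\}$, and decompose the $p=0$ term as above.
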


Before proving it, let us see how it implies the desired conclusion.
Fix $i\in \{1,\ldots,d\}$ and assume that the sequence $\{f^i_k\}$ of Theorem \ref{theo:main} satisfies $(i)$, that is,
$$\lim_{k\to\infty}\vp_q\big( I_{n_i}^{X^{(q)}}(f_k^i)^4 \big) = (2+q^{n_i^2}) c(i,i)^2=\lim_{k\to \infty} \ (2+q^{n_i^2}) \|f_k^i\|_q^2.$$
Thanks to (\ref{mom-4}), we deduce that
\begin{equation}\label{trans}
\sum_{p=1}^{n_i-1}\left\{\| f^i_k\pt{p} f^i_k\|_{q}^2+\bigg( \sum_{\si\in \mathfrak{S}_{2n}^p}q^{\mathrm{inv}(\si)}\bigg)  \|f^i_k \contra{p}f^i_k\|^2_{L^2(\R_+^{2n_i-2p})}\right\}\to 0\quad\mbox{as $k\to\infty$}.
\end{equation}
If $q=0$, we have $\|f_k^i \pt{p}f_k^i\|_q=\|f^i_k \contra{p}f^i_k\|^2_{L^2(\R_+^{2n_i-2p})}$
and $ \sum_{\si\in \mathfrak{S}_{2n}^p}q^{\mathrm{inv}(\si)}=0$; we deduce that
$\|f^i_k \contra{p}f^i_k\|^2_{L^2(\R_+^{2n_i-2p})} \to 0$ for every $p\in \{1,\ldots,n_i-1\}$, which is precisely $(ii)$. If $q>0$, then $\sum_{\si\in \mathfrak{S}_{2n}^p}q^{\mathrm{inv}(\si)} >0$ and, accordingly, we also get that $\|f^i_k \contra{p}f^i_k\|^2_{L^2(\R_+^{2n_i-2p})} \to 0$ for every $p\in \{1,\ldots,n_i-1\}$, that is, $(ii)$ holds true as well.

\

{\it Proof of Proposition \ref{calcul4}}.
By using the product formula (\ref{mul}) together with the isometry (\ref{isometrie}), we first deduce
$$\vp_q\big( I_n^{X^{(q)}}(f)^4\big) =\vp_q\left[\bigg( \sum_{p=0}^n I_{2n-2p}(f \pt{p} f) \bigg)^2\right]=\sum_{p=0}^n \|f\pt{p}f\|_q^2.$$
Next, observe that $(f\pt{n} f)^2=\|f\|_q^4$ and, using (\ref{prod-scal-2}), write $\|f\otimes f\|_q^2$ as
$$\|f\otimes f\|_q^2=\sum_{\si\in \mathfrak{S}_{2n}} q^{\mathrm{inv}(\si)}\int_{P_2(\si)} (f\otimes f) \otimes (f\otimes f)^\ast=\sum_{p=0}^n\sum_{\si\in \mathfrak{S}_{2n}^p} q^{\mathrm{inv}(\si)}\int_{P_2(\si)} (f\otimes f) \otimes (f\otimes f)^\ast.$$
It is readily checked that
$$\sum_{\si\in \mathfrak{S}_{2n}^n} q^{\mathrm{inv}(\si)}\int_{P_2(\si)} (f\otimes f) \otimes (f\otimes f)^\ast=\bigg( \sum_{\si \in \mathfrak{S}_n} q^{\mathrm{inv}(\si)} \int_{P_2(\si)} f\otimes f \bigg)^2=\|f\|_q^4,$$
while
$$\sum_{\si\in \mathfrak{S}_{2n}^0} q^{\mathrm{inv}(\si)}\int_{P_2(\si)} (f\otimes f) \otimes (f\otimes f)^\ast=q^{n^2}\bigg( \sum_{\si \in \mathfrak{S}_n} q^{\mathrm{inv}(\si)} \int_{P_2(\si)} f\otimes f \bigg)^2=q^{n^2}\|f\|_q^4.$$
Also, since $f$ is a symmetric function, it is easy to verify that for every $\si\in \mathfrak{S}_{2n}^p$ with $p\in \{1,\ldots,n-1\}$, one has
$$\int_{P_2(\si)} (f\otimes f) \otimes (f\otimes f)^\ast=\| f \contra{p} f\|_{L^2(\R_+^{2n-2p})}^2.$$
As a result, we deduce that
$$\|f\otimes f\|_q^2=(1+q^{n^2})\|f\|_q^4+\sum_{p=1}^{n-1}\big( \sum_{\si\in \mathfrak{S}_{2n}^p}q^{\mathrm{inv}(\si)}\big) \cdot \|f \contra{p}f\|_{L^2(\R_+^{2n-2p})}^2.$$
\qed

\begin{remark}\label{rk:2}
Assume that $n=2$ and let $f\in L^2(\R_+^2)$ be a symmetric function.
It is readily checked that
$\|f\pt{1}f\|^2_q = (1+q)^5 \|f\contra{1}f\|^2_{L^2(\R_+^2)}$
We deduce from (\ref{mom-4}) that
\begin{equation}\label{zer}
\vp_q\big( I_2^{X^{(q)}}(f)^4 \big)=(2+q^{4}) \|f\|_q^4+(1+q)^4(2q+1)  \|f \contra{1}f\|^2_{L^2(\R_+^{2})}.
\end{equation}
In particular, when $q=-1/2$ and $f=\sqrt{2}\,{\bf 1}_{[0,1]^2}$, we have
$\varphi_q(I_2^{X^{(q)}}(f)^2)=\|f\|_q^2=1$ and
$\varphi_q(I_2^{X^{(q)}}(f)^4)=2+q^4$. But $I_2^{X^{(q)}}(f)$ is not $\mathcal{G}_{q^4}(0,1)$-distributed since,
by using the product formula (\ref{mul}) together with the isometry (\ref{isometrie}) and the fact that
$f\pt{1}f=(1+q)^2\,f\contra{1}f=\sqrt{2}(1+q)^2\,f$, we have
\begin{eqnarray*}
\varphi_q(I_2^{X^{(q)}}(f)^3)
&=&\varphi_q\left(I_2^{X^{(q)}}(f)\, \big(I_{4}^{X^{(q)}}(f \otimes f) + I_{2}^{X^{(q)}}(f \pt{1} f) + \|f\|_q^2 \big)\right)\\
&=&\langle f,f \pt{1} f\rangle_q
=\sqrt{2}(1+q)^2\neq 0.
\end{eqnarray*}
This explicit situation shows that Theorem \ref{theo:main} may actually be false when $q$ is negative.
However, formula (\ref{zer}) implies the following interesting fact when $n=2$. Let $q\in(-1,-\frac12)\cup(-\frac12,1]$ and let $\{f_k\}_{k\geq 1}$ be a sequence of symmetric functions in $L^2(\R_+^2)$ satisfying
$\|f_k\|_{q} \to 1$ as $k\to\infty$.
Then, due to (\ref{zer}) we have equivalence between $(a)$ $\vp_q\big( I_2^{X^{(q)}}(f_k)^4 \big) \to 2+q^{4}$
and $(b)$ $\|f_k \contra{1}f_k\|^2_{L^2(\R_+^{2})}\to 0$ as $k\to\infty$.
Following the same line of reasoning as in the forthcoming proof of $(ii)\Rightarrow (iv)$, we may deduce that
$(a)-(b)$ are also equivalent to $(c)$ $I_2^{X^{(q)}}(f_k)\overset{\rm law}{\to} \mathcal{G}_{q^4}(0,1)$. That is, when $n=2$,
Theorem \ref{theo:main} happens to hold true for all $q\in(-1,-\frac12)\cup(-\frac12,1]$.\qed
\end{remark}

\

\fbox{
\begin{minipage}{0.12\textwidth}
$(ii) \Rightarrow (iv)$
\end{minipage}
}
$\mbox{ }$\,
Assume $(ii)$ and let us show $(iv)$.
Our starting point is the formula (\ref{form-mom}), which yields
\begin{equation}\label{start}
\vp_q\big( I_{n_{i_1}}^{X^{(q)}}(f_k^{i_1}) \ldots I_{n_{i_{r}}}^{X^{(q)}}(f^{i_{r}}_k)\big)=\sum_{\pi\in C_2(n_{i_1}\otimes \ldots \otimes n_{i_r})} q^{\mathrm{Cr}(\pi)} \int_\pi f^{i_1}_k \otimes \ldots \otimes f^{i_r}_k.
\end{equation}
Let us introduce the subset $C^0_2(n_{i_1}\otimes \ldots \otimes n_{i_r})$ of pairings $\pi\in C_2(n_{i_1}\otimes \ldots \otimes n_{i_r})$ that meet the  following two conditions:

\smallskip

\noindent
(A): If two blocks of $n_{i_1}\otimes \ldots \otimes n_{i_r}$ are linked, then they necessarily have the same cardinality.

\smallskip

\noindent
(B): Any two blocks of $n_{i_1}\otimes \ldots \otimes n_{i_r}$ having the same number $n$ of elements are necessarily linked by $0$ or $n$ pair(s).

\

Let $\pi \in C_2(n_{i_1}\otimes \ldots \otimes n_{i_r})\backslash C^0_2(n_{i_1}\otimes \ldots \otimes n_{i_r})$ and assume that $\pi$ does not satisfy (A), i.e., that there exists two blocks (say the $l$th and the $m$th blocks) with different cardinalities ($n_{i_l}<n_{i_m}$) that are connected in $\pi$ by $p$ pairs, for some $p\in \{1,\ldots,n_{i_l}\}$. Then there exists $\pi'\in C_2((n_{i_l}+n_{i_m}-2p) \otimes n_{i_1} \otimes \ldots \otimes \widehat{n_{i_l}}\otimes \ldots \otimes \widehat{n_{i_m}} \otimes \ldots \otimes n_{i_r})$ such that
$$\int_\pi f^{i_1}_k \otimes \ldots \otimes f^{i_r}_k=\int_{\pi'} (f^{i_l}_k \contra{p} f_k^{i_m}) \otimes f_k^{i_1} \otimes \ldots \otimes \widehat{f^{i_l}_k} \otimes \ldots \otimes \widehat{f^{i_m}_k}\otimes \ldots \otimes f^{i_{r}} ,$$
where the notation \ $\widehat{}$ \ means that this particular object is removed. By using Cauchy-Schwarz, we  get
$$
\big| \int_\pi f^{i_1}_k \otimes \ldots \otimes f^{i_{r}}_k\big| \leq \|f^{i_l}_k \contra{p} f_k^{i_m}\|_{L^2(\R_+^{n_{i_l}+n_{i_m}-2p})} \prod_{s\in \{1,\ldots,r\} \backslash \{l,m\}} \|f^{i_s}_k\|_{L^2(\R_+^{n_{i_s}})}.
$$
Now, some elementary computations yield
\bean
\|f^{i_l}_k \contra{p} f_k^{i_m}\|_{L^2(\R_+^{n_{i_l}+n_{i_m}-2p})}^2 &=&\langle f_k^{i_l} \contra{n_{i_l}-p} f_k^{i_l},f_k^{i_m} \contra{n_{i_m}-p} f_k^{i_m} \rangle_{L^2(\R_+^{2p})}\\
&\leq &\|f_k^{i_l} \contra{n_{i_l}-p} f_k^{i_l} \|_{L^2(\R_+^{2p})} \|f_k^{i_m} \contra{n_{i_m}-p} f_k^{i_m}\|_{L^2(\R_+^{2p})}.
\eean
Since $n_{i_m}-p\in \{1,\ldots,n_{i_m}-1\}$, we know that $\|f_k^{i_m} \contra{n_{i_m}-p} f_k^{i_m}\|_{L^2(\R_+^{2p})} \to 0$ as $k\to\infty$, and accordingly $\int_\pi f^{i_1}_k \otimes \ldots \otimes f^{i_r}_k \to 0$.

\smallskip

\noindent
On the other hand, let $\pi\in C_2(n_{i_1}\otimes \ldots \otimes n_{i_r})\backslash C^0_2(n_{i_1}\otimes \ldots \otimes n_{i_r})$ be an element not satisfying (B), i.e., such that there exists two blocks (say the $l$th and the $m$th blocks) with the same cardinality $n$ that are connected in $\pi$ by $p$ pairs, for some $p\in \{1,\ldots,n-1\}$. Using similar arguments as above, we get
$$
\big| \int_\pi f^{i_1}_k \otimes \ldots \otimes f^{i_{r}}_k\big|^2 \leq \|f_k^{i_l} \contra{n-p} f_k^{i_l} \|_{L^2(\R_+^{2p})} \|f_k^{i_m} \contra{n-p} f_k^{i_m}\|_{L^2(\R_+^{2p})} \prod_{s\in \{1,\ldots,r\} \backslash \{l,m\}} \|f^{i_s}_k\|_{L^2(\R_+^{n_{i_s}})}^2,
$$
and, since $n-p\in \{1,\ldots,n-1\}$, we can here again conclude that $\int_\pi f^{i_1}_k \otimes \ldots \otimes f^{i_{2r}}_k \to 0$.

\

Going back to (\ref{start}), we deduce that
\begin{equation}\label{start-1}
\vp_q\big( I_{n_{i_1}}^{X^{(q)}}(f_k^{i_1}) \ldots I_{n_{i_{r}}}^{X^{(q)}}(f^{i_{r}}_k)\big)-\sum_{\pi\in C_2^0(n_{i_1}\otimes \ldots \otimes n_{i_r})} q^{\mathrm{Cr}(\pi)} \int_\pi f^{i_1}_k \otimes \ldots \otimes f^{i_r}_k \to 0
\end{equation}
as $k\to\infty$.
Now, with every $\pi\in C_2^0(n_{i_1}\otimes \ldots \otimes n_{i_r})$, we associate a pairing $\widetilde{\pi}\in \mathcal{P}_2(\{1,\ldots,r\})$ according to the basic following procedure: two points $l<m\in \{1,\ldots,r\}$ are linked in $\widetilde{\pi}$ if the $l$th and $m$th blocks of $n_{i_1} \otimes \ldots \otimes n_{i_r}$ are linked in $\pi$ (see Figure 2). Owing to the two conditions (A) and (B), it is clear that $\widetilde{\pi}$ defines an element of $C_2^{col}(n_{i_1},\ldots,n_{i_r})$. Then, divide $\widetilde{\pi}$ into the disjoint subsets $\widetilde{\pi}_1,\ldots,\widetilde{\pi}_K$, where $\widetilde{\pi}_t$ stands for the set of pairs having the same color $N_t$. In the sequel, we will use the following explicit notation
$$\widetilde{\pi}_t=\{\{l_s^{(t)},m_s^{(t)}\}, \ l_s^{(t)} < m_s^{(t)}, \ 1\leq s \leq R_t\} \quad , \quad t\in \{1,\ldots,K\}.$$
Conversely, it is clear that any pairing $\pi \in  C_2^0(n_{i_1}\otimes \ldots \otimes n_{i_r})$ can be reconstructed from the two following
ingredients:

\smallskip

\noindent
$\bullet$ its "projection" $\widetilde{\pi}$,

\smallskip

\noindent
$\bullet$ the description of the links in $\pi$ coded by a single pair in $\widetilde{\pi}$. This description can be made clear through a set of permutations $\si_s^{(t)} \in \mathfrak{S}_{N_t}$ associated with each pair $(l_s^{(t)},m_s^{(t)})$ of $\widetilde{\pi}$, according to the following principle: in $\pi$, the links between the $l_s^{(t)}$th and $m_s^{(t)}$th blocks are given (when considering these blocks isolated from the others) by the set
$$\{(N_t+1-i,N_t+\si_s^{(t)}(i)), \ 1\leq i\leq N_t\}.$$

\smallskip

With this identification in mind, our computation of $\text{Cr}(\pi)$ for $\pi \in C_2^0(n_{i_1}\otimes \ldots \otimes n_{i_r})$ relies on the two following observations:

\smallskip

\noindent
$\bullet$ a crossing in $\widetilde{\pi}$ between a pair in $\widetilde{\pi}_i$ and a pair in $\widetilde{\pi}_j$ gives birth to $N_iN_j$ crossings in $\pi$.

\smallskip

\noindent
$\bullet$ the number of crossings in $\pi$ between the pairs that connect the $l_s^{(t)}$th and $m_s^{(t)}$th blocks is given by $\text{inv}(\si_s^{(t)})$.

\smallskip

From the above considerations we deduce the following formula: for every fixed $\pi \in C_2^0(n_{i_1}\otimes \ldots \otimes n_{i_r})$, one has
$$
q^{\mathrm{Cr}(\pi)} \int_\pi f^{i_1}_k \otimes \ldots \otimes f^{i_r}_k
=\prod_{1\leq i\leq j\leq K} q^{\mathrm{Cr}(\widetilde{\pi}_i,\widetilde{\pi}_j)N_iN_j} \prod_{1\leq t\leq K} \prod_{1\leq s\leq R_t} q^{\mathrm{inv}(\si_s^{(t)})} \int_{P_2(\si_s^{(t)})} f_k^{i_{l_s^{(t)}}} \otimes f_k^{i_{m_s^{(t)}}}.
$$
Therefore,
\begin{multline*}
\sum_{\pi \in C_2^0(n_{i_1}\otimes \ldots \otimes n_{i_r})}q^{\mathrm{Cr}(\pi)} \int_\pi f^{i_1}_k \otimes \ldots \otimes f^{i_r}_k\\
=\sum_{\widetilde{\pi}\in C_2^{col}(n_{i_1},\ldots,n_{i_r})} \prod_{1\leq i\leq j\leq K} q^{\mathrm{Cr}(\widetilde{\pi}_i,\widetilde{\pi}_j)N_iN_j}\prod_{1\leq t\leq K} \prod_{\{l,m\}\in \widetilde{\pi}_t} \sum_{\si\in \mathfrak{S}_{N_t}} q^{\mathrm{inv}(\si)} \int_{P_2(\si)} f_k^{i_l} \otimes f_k^{i_m}.
\end{multline*}
At this point, let us recall that
$$\sum_{\si\in \mathfrak{S}_{N_t}} q^{\mathrm{inv}(\si)} \int_{P_2(\si)} f_k^{i_l} \otimes f_k^{i_m}=\langle f^{i_l}_k,f^{i_m}_k\rangle_q \to c(i_l,i_m)$$
as $k\to\infty$, so that
$$\sum_{\pi \in C_2^0(n_{i_1}\otimes \ldots \otimes n_{i_r})}q^{\mathrm{Cr}(\pi)} \int_\pi f^{i_1}_k \otimes \ldots \otimes f^{i_r}_k\to \sum_{\pi \in C_2^{col}((n_{i_1},\ldots,n_{i_r}))} \prod_{1\leq i\leq j\leq K} q^{\mathrm{Cr}(\pi^i,\pi^j)N_iN_j} \prod_{\{l,m\}\in \pi} c(i_l,i_m)$$
as $k\to\infty$ which, thanks to (\ref{start-1}), entails (\ref{formu-gene}). The proof of Theorem \ref{theo:main}
is complete.
\qed

\begin{center}
\begin{figure}[!ht]\label{pic:pairing}

\begin{pspicture}(0,-5.5)(13.4,8)


\psline[linecolor=blue](0,0)(0,6.6) \psline[linecolor=blue](0,6.6)(11.4,6.6) \psline[linecolor=blue](11.4,6.6)(11.4,0)
\psline[linecolor=blue](0.4,0)(0.4,7) \psline[linecolor=blue](0.4,7)(11.8,7) \psline[linecolor=blue](11.8,7)(11.8,0)

\psline[linecolor=green](1,0)(1,6) \psline[linecolor=green](1,6)(7,6) \psline[linecolor=green](7,6)(7,0)

\psline[linecolor=red](1.6,0)(1.6,5.4) \psline[linecolor=red](1.6,5.4)(6,5.4) \psline[linecolor=red](6,5.4)(6,0)
\psline[linecolor=red](2,0)(2,5) \psline[linecolor=red](2,5)(5.6,5) \psline[linecolor=red](5.6,5)(5.6,0)
\psline[linecolor=red](2.4,0)(2.4,4.6) \psline[linecolor=red](2.4,4.6)(6.4,4.6) \psline[linecolor=red](6.4,4.6)(6.4,0)

\psline[linecolor=blue](3,0)(3,4) \psline[linecolor=blue](3,4)(9.4,4) \psline[linecolor=blue](9.4,4)(9.4,0)
\psline[linecolor=blue](3.4,0)(3.4,3.6) \psline[linecolor=blue](3.4,3.6)(9,3.6) \psline[linecolor=blue](9,3.6)(9,0)

\psline[linecolor=green](4,0)(4,3) \psline[linecolor=green](4,3)(12.4,3) \psline[linecolor=green](12.4,3)(12.4,0)

\psline[linecolor=blue](4.6,0)(4.6,2) \psline[linecolor=blue](4.6,2)(13,2) \psline[linecolor=blue](13,2)(13,0)
\psline[linecolor=blue](5,0)(5,2.4) \psline[linecolor=blue](5,2.4)(13.4,2.4) \psline[linecolor=blue](13.4,2.4)(13.4,0)

\psline[linecolor=red](7.6,0)(7.6,1.4) \psline[linecolor=red](7.6,1.4)(10,1.4) \psline[linecolor=red](10,1.4)(10,0)
\psline[linecolor=red](8,0)(8,1) \psline[linecolor=red](8,1)(10.4,1) \psline[linecolor=red](10.4,1)(10.4,0)
\psline[linecolor=red](8.4,0)(8.4,0.6) \psline[linecolor=red](8.4,0.6)(10.8,0.6) \psline[linecolor=red](10.8,0.6)(10.8,0)

\rput(0,0){$\bullet$}\rput(11.8,0){$\bullet$} \rput(7,0){$\bullet$} \rput(6,0){$\bullet$} \rput(5.6,0){$\bullet$}
\rput(6.4,0){$\bullet$} \rput(9.4,0){$\bullet$} \rput(9,0){$\bullet$} \rput(12.4,0){$\bullet$} \rput(13,0){$\bullet$}
\rput(13.4,0){$\bullet$} \rput(10,0){$\bullet$} \rput(10.4,0){$\bullet$} \rput(10.8,0){$\bullet$} \rput(11.4,0){$\bullet$} \rput(7.6,0){$\bullet$}

\rput(0,0){$\bullet$} \rput(0.4,0){$\bullet$} \rput(1,0){$\bullet$} \rput(1.6,0){$\bullet$} \rput(2,0){$\bullet$}
\rput(2.4,0){$\bullet$} \rput(3,0){$\bullet$} \rput(3.4,0){$\bullet$} \rput(4,0){$\bullet$}  \rput(4.6,0){$\bullet$}
\rput(5,0){$\bullet$} \rput(7,0){$\bullet$} \rput(8,0){$\bullet$} \rput(8.4,0){$\bullet$}

\psline[linecolor=blue](0.2,-5)(0.2,-2) \psline[linecolor=blue](0.2,-2)(11.6,-2) \psline[linecolor=blue](11.6,-2)(11.6,-5)

\psline[linecolor=green](1,-5)(1,-2.4) \psline[linecolor=green](1,-2.4)(7,-2.4) \psline[linecolor=green](7,-2.4)(7,-5)

\psline[linecolor=red](2,-5)(2,-2.8) \psline[linecolor=red](2,-2.8)(6,-2.8) \psline[linecolor=red](6,-2.8)(6,-5)

\psline[linecolor=blue](3.2,-5)(3.2,-3.2) \psline[linecolor=blue](3.2,-3.2)(9.2,-3.2) \psline[linecolor=blue](9.2,-3.2)(9.2,-5)

\psline[linecolor=green](4,-5)(4,-3.6) \psline[linecolor=green](4,-3.6)(12.4,-3.6) \psline[linecolor=green](12.4,-3.6)(12.4,-5)

\psline[linecolor=blue](4.8,-5)(4.8,-4) \psline[linecolor=blue](4.8,-4)(13.2,-4) \psline[linecolor=blue](13.2,-4)(13.2,-5)

\psline[linecolor=red](8,-5)(8,-4.4) \psline[linecolor=red](8,-4.4)(10.4,-4.4) \psline[linecolor=red](10.4,-4.4)(10.4,-5)

\rput(0.2,-5){$\bullet$} \rput(1,-5){$\bullet$} \rput(2,-5){$\bullet$} \rput(3.2,-5){$\bullet$} \rput(4,-5){$\bullet$}
\rput(4.8,-5){$\bullet$} \rput(6,-5){$\bullet$}  \rput(7,-5){$\bullet$} \rput(8,-5){$\bullet$} \rput(9.2,-5){$\bullet$}
\rput(10.4,-5){$\bullet$} \rput(11.6,-5){$\bullet$} \rput(12.4,-5){$\bullet$} \rput(13.2,-5){$\bullet$}

\end{pspicture}

\

\caption{A pairing $\pi$ in $C_2^0(2\otimes 1\otimes 3\otimes 2\otimes 1\otimes 2\otimes 3\otimes 1 \otimes 3\otimes 2\otimes 3\otimes 2\otimes 1\otimes 2)$ (above) projected in $\widetilde{\pi}$ (below). Here, $\widetilde{\pi}_1=\{(2,8),(5,13)\}$, $\widetilde{\pi}_2=\{(1,12),(4,10),(6,14)\}$, $\widetilde{\pi}_3=\{(3,7),(9,11)\}$.}
\end{figure}
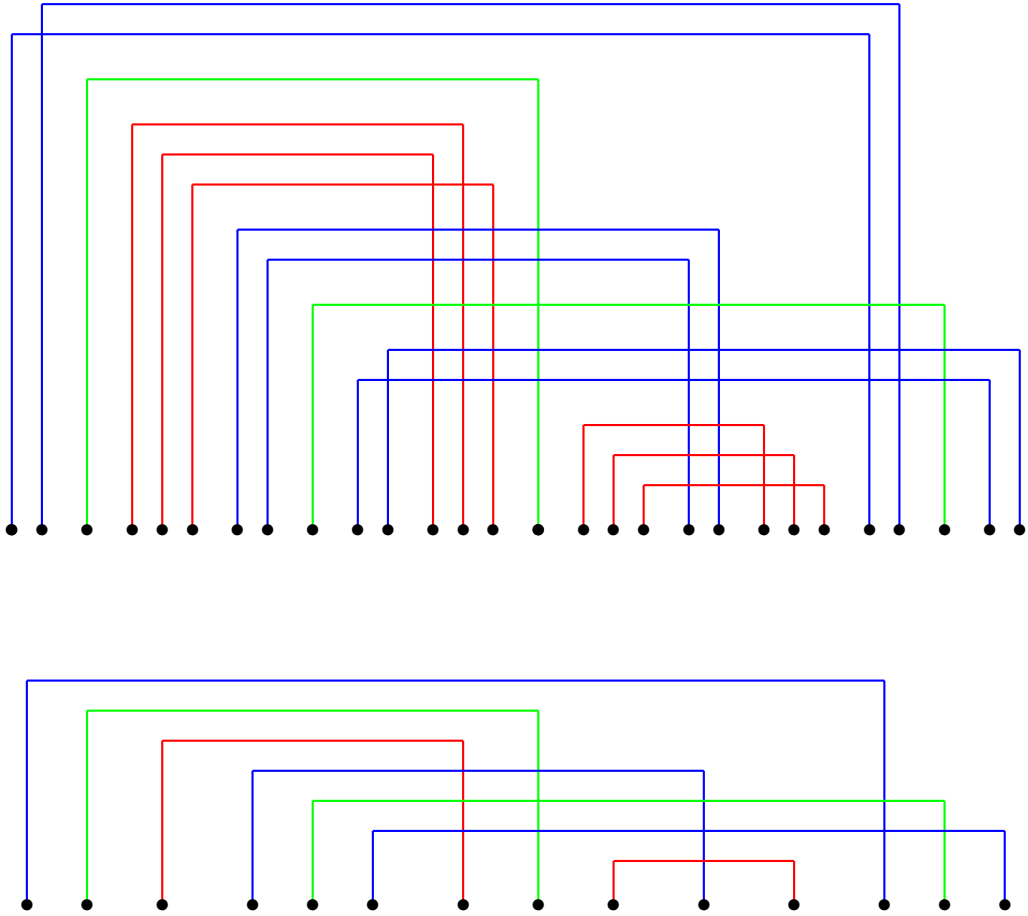
\end{center}


\begin{thebibliography}{99}

\bibitem{biane-speicher}
P. Biane and R. Speicher (1998). Stochastic calculus with respect to free Brownian motion and analysis on Wigner space. \emph{Probab. Theory Rel. Fields} {\bf 112}, 373--409.

\bibitem{BSp1}
M. Bo$\dot{\rm z}$ejko and R. Speicher (1991). An example of a generalized Brownian motion. {\it Comm. Math. Phys.} {\bf 137}, 519--531.

\bibitem{BSp2}
M. Bo$\dot{\rm z}$ejko and R. Speicher (1996). Interpolations between bosonic and fermionic relations given by generalized Brownian motions. {\it Math. Z.} {\bf 222}, 135--160.



\bibitem{breuermajor} P. Breuer and P. Major (1983). Central limit theorems for
non-linear functionals of Gaussian fields.
\textit{J. Mult. Anal.} \textbf{13}, 425-441.


\bibitem{don-mar}
\rm C. Donati-Martin (2003).
\rm Stochastic integration with respect to $q$-Brownian motion.
{\it Probab. Theory Rel. Fields} {\bf 125}, 77--95.

\bibitem{FB} U. Frisch and R. Bourret (1970). Parastochastics. {\it J. Math. Phys.} {\bf 11}, 364--390.

\bibitem{Gre} O. W. Greenberg (1991). Particles with small violations of Fermi or Bose statistics. {\it Phys. Rev.} D {\bf 43}, 4111--4120.


\bibitem{knps} T. Kemp, I. Nourdin, G. Peccati and R. Speicher (2011). Wigner chaos and the fourth moment. {\it Ann. Probab.}, in press.

\bibitem{nicaspeicher}
\rm A. Nica and R. Speicher (2006). {\it Lectures on the Combinatorics of Free Probability}. Cambridge University Press.

\bibitem{NouPecBook}
I. Nourdin and G. Peccati (2012).
{\it Normal Approximations Using Malliavin Calculus: from Stein's Method to Universality}.
Cambridge Tracts in Mathematics. Cambridge University Press.


\bibitem{nua}
\rm D. Nualart (2006).
\it The Malliavin calculus and related topics.
\rm Springer Verlag, Berlin, Second edition.

\bibitem{NP} D. Nualart and G. Peccati (2005). Central limit theorems for sequences of multiple stochastic integrals. \it Ann. Probab. {\bf 33} \rm (1), 177-193.

\bibitem{snia} P. Sniady (2001). Gaussian random matrix models for $q$-deformed Gaussian variables. {\it Comm. Math. Phys.} {\bf 216}, 515--537.

\bibitem{voiculescu} D.V. Voiculescu (1991). Limit laws for random matrices and free product. {\it Invent. Math.} {\bf 104}, 201--220.


\end{thebibliography}
\end{document}